\crefname{hypothesis}{Hypothesis}{Hypotheses}
\def\Xint#1{\mathchoice
  {\XXint\displaystyle\textstyle{#1}}%
  {\XXint\textstyle\scriptstyle{#1}}%
  {\XXint\scriptstyle\scriptscriptstyle{#1}}%
  {\XXint\scriptscriptstyle\scriptscriptstyle{#1}}%
\!\int}
\def\XXint#1#2#3{{\setbox0=\hbox{$#1{#2#3}{\int}$ }
\vcenter{\hbox{$#2#3$ }}\kern-.6\wd0}}
\def\dashint{\Xint-}
\newcommand{\grad}{{\rm grad}}
\newcommand{\curl}{{\rm curl}}
\newcommand{\tr}{{\rm tr}}
\renewcommand{\div}{{\rm div}}
\newcommand\commentone[1]{\textcolor{black}{#1}}
\title{Simplex-averaged finite element methods for $H(\grad)$,
$H(\curl)$ and $H(\div)$ convection-diffusion
problems\thanks{
The work of Shuonan Wu is supported in part by the startup grant from
Peking University.  The work of Jinchao Xu is supported in part by the
US Department of Energy Office of Science, Office of Advanced
Scientific Computing Research, Applied Mathematics program under Award
Number DE-SC0014400.}
}
\author{Shuonan Wu\thanks{School of Mathematical Sciences,
  Peking University, Beijing 100871, China 
(\email{snwu@math.pku.edu.cn}, \url{http://dsec.pku.edu.cn/\~snwu}).}
\and Jinchao Xu\thanks{Department of Mathematics, Pennsylvania State
University, University Park, PA 16802, USA
(\email{xu@math.psu.edu}, \url{http://www.math.psu.edu/xu/}).}
}
\begin{document}

\maketitle

\begin{abstract}
This paper is devoted to the construction and analysis of the finite
element approximations for the $H(D)$ convection-diffusion problems,
where $D$ can be chosen as ${\rm grad}$, ${\rm curl}$ or ${\rm div}$
in 3D case. An essential feature of these constructions is to properly
average the PDE coefficients on the sub-simplexes. The schemes are of
the class of exponential fitting methods that result in special
upwind schemes when the diffusion coefficient approaches to zero.
Their well-posedness are established for sufficiently small mesh size
assuming that the convection-diffusion problems are uniquely solvable.
Convergence of first order is derived under minimal smoothness of the
solution. Some numerical examples are given to demonstrate the
robustness and effectiveness for general convection-diffusion
problems.
\end{abstract}

\begin{keywords}
Convection-diffusion problems, finite element methods, discrete
differential forms, exponential fitting, magnetohydrodynamics 
\end{keywords}

\begin{AMS}
65N30, 65N12, 65N15
\end{AMS}

\section{Introduction} \label{sec:intro}
The $H(\grad)$, $H(\curl)$ and $H(\div)$ convection-diffusion
problems, especially the convection dominated ones, arise in many
important applications.  To fix ideas, we consider a simple example
taken from magnetohydrodynamics \cite{gerbeau2006mathematical},
$$ 
\left\{
\begin{aligned}
& \bm{j} - R_m^{-1} \nabla \times (\mu_r^{-1}\bm{B}) = 0, \\
& \bm{B}_t + \nabla \times \bm{E} = 0, \\
& \bm{j} = \sigma_r(\bm{E} + \bm{v}\times \bm{B}), \\
& \nabla \cdot \bm{B} = 0.
\end{aligned}
\right.
$$ 
Physically, $\bm{E}$ and $\bm{B}$ are the non-dimensionalized electric
field and magnetic field inside a conductor moving with a velocity
$\bm{v}$, respectively. The physical parameters are the magnetic
Reynolds number $R_m$, the relative electrical conductivity
$\sigma_r$, and the relative magnetic permeability $\mu_r$. With a
simple implicit time-discretization on the Faraday's Law and
eliminations of the magnetic field $\bm{E}$ and the current density
$\bm{j}$, the electric field satisfies the following $H(\curl)$
convection-diffusion equation: 
\begin{equation} \label{eq:E-cd}
\nabla \times (\alpha \nabla \times \bm{E}) - \bm{\beta} \times
(\nabla \times \bm{E}) + \gamma \bm{E} = \bm{f}, 
\end{equation} 
where $\alpha = R_m^{-1}\mu_r^{-1}$, $\bm{\beta} = \sigma_r \bm{v}$,
$\gamma = \sigma_r / k$ and 
$$ 
\bm{f} = \frac{1}{k}\nabla \times (R_m^{-1}\mu_r^{-1}\bm{B}^{-}) -
\frac{1}{k}\sigma_r \bm{v}\times \bm{B}^{-},
$$
and $\bm{B}^{-}$ is a known magnetic field from the previous time
step. The term $-\bm{\beta} \times (\nabla \times \bm{E})$ in
\eqref{eq:E-cd} is the electric convection, which is an analogue of
the $\bm{\beta} \cdot \nabla u$ in the scalar convection-diffusion
equation
\begin{equation} \label{eq:scalar-cd} 
-\nabla\cdot(\alpha \nabla u) + \bm{\beta}\cdot \nabla u + \gamma u =
f.
\end{equation} 
The theory and numerical analysis of such a scalar
convection-diffusion equation are well-studied in the literature.  It
is well known that, for small $\alpha$, boundary layers may appear in
the solution of \eqref{eq:scalar-cd} and standard finite element
methods may suffer from strong numerical oscillations and
instabilities if the mesh size is not small enough.

Numerous studies on the stable discretization of scalar
convection-diffusion have been published. In the finite element
methods, various special strategies have been developed,
including the stabilized discontinuous Galerkin method
\cite{houston2002discontinuous, brezzi2004discontinuous}, SUPG method
\cite{brooks1982streamline, franca1992stabilized,
burman2010consistent}, bubble function stabilization
\cite{brezzi1994choosing, brezzi1998applications, brezzi1998further,
franca2002stability}, local projection stabilization
\cite{ganesan2010stabilization}, edge stabilization and continuous
interior penalty method \cite{burman2004edge,
burman2005unified, burman2007continuous}. 
Other studies do not require the characteristics to be specified,
such as exponential fitting \cite{brezzi1989two, xu1999monotone,
dorfler1999uniform, lazarov2012exponential, bank2017arbitrary} and
Petrov-Galerkin method \cite{morton2009convection,
christiansen2014stability}.

The $H(\curl)$ and $H(\div)$ convection-diffusion problems have
received more and more  attention from numerical computation. The
discretization of the general convection, known as extrusion, has been
discussed via Whitney forms in \cite{bossavit2003extrusion}.
For the pure advection problem, the stabilized Galerkin method has
been extended from $0$-form \cite{brezzi2004discontinuous} to $1$-form
\cite{heumann2013stabilized} and $k$-form \cite{heumann2011eulerian,
heumann2015stabilized}. These discretizations of the advection problem,
along with the proper discretization of the diffusion term, are feasible to
tackle the general convection-diffusion problems. Besides the Eulerian
method, the semi-Lagrangian method can be applied to the
time-dependent convection-diffusion problems for differential forms
\cite{heumann2011eulerian, heumann2012fully, heumann2013convergence}.

More specifically, we are motivated by the Edge-Averaged Finite
Element (EAFE) method for scalar convection-diffusion problem proposed
by Xu and Zikatanov \cite{xu1999monotone}. There are two main
advantages to using EAFE: (1) The monotonicity of EAFE can be
established for a very general class of meshes; (2) The local
stiffness matrix of EAFE can easily be obtained by modifying that of
standard Poisson. A construction that ensures the general SPD
diffusion coefficient matrix was proposed in
\cite{lazarov2012exponential}. A high-order Scharfetter-Gummel scheme,
known as a high-order extension of EAFE, was given in
\cite{bank2017arbitrary}.  Similar to \eqref{eq:scalar-cd}, the
standard finite element methods also seriously suffer from numerical
instabilities for \eqref{eq:E-cd} for a large magnetic Reynolds number
$R_{m}$.  In this paper, we extend the EAFE scheme
\cite{xu1999monotone} to the $H(D)$ convection-diffusion equations so
that the resulting finite element discretizations work for a wide
range of diffusion coefficients $\alpha$.  

The proposed schemes for $H(D)$ convection-diffusion problems have
several intriguing features.  First, thanks to the special properties
of the $\mathcal{P}_1^-\Lambda^k$ discrete de Rham complex, the
schemes are the standard variational formulations modified by properly
averaging the PDE coefficients on the sub-simplexes, and are therefore
named {\it simplex-averaged finite element (SAFE)} schemes. Second,
their derivations stem from the graph Laplacian for $H(D)$ diffusion,
where only $D = \grad$ was given in the previous literature.  Third,
by introducing several special interpolations $\bar{\Pi}^k_T$, the
schemes can be recast into the equivalent ones that are suitable for
the analysis. Last, by means of the Bernoulli functions, the resulting
schemes are shown to converge to special upwind schemes as the
diffusion coefficient approaches to zero. The SAFE schemes also
provide a promising way to discrete the Lie convection with Hodge
Laplacian \cite{arnold2017finite}.

The rest of the paper is organized as follows. In Section
\ref{sec:preliminaries} we introduce the general convection-diffusion
problems and briefly review the $\mathcal{P}_1^-\Lambda^k$
discrete de Rham complex. In Section \ref{sec:local-operator} we give
a crucial identity which holds for $\grad$, $\curl$, and $\div$ in a
unified fashion, then introduce local simplex-averaged operators. In
Section \ref{sec:SAFE} we derive the simplex-averaged finite element
schemes for $H(D)$ convection-diffusion problems. An important step
here is the derivation of $H(D)$ graph Laplacian.  In Section
\ref{sec:analysis} we prove the stability of SAFE for sufficiently
small mesh size and establish the error estimate under minimal
smoothness of the solution. Finally, in Section \ref{sec:numerical},
we show that the SAFE schemes are robust and effective for general
convection-diffusion problems through numerical tests. The detailed
implementation and limiting schemes are presented in Appendix
\ref{sec:append}.

\section{Preliminaries} \label{sec:preliminaries}
In this section, we introduce some notation and briefly review some
basic properties of finite element triangulations and finite element
spaces.  In particular, we discuss some special properties of the
$\mathcal{P}_1^-\Lambda^k$ discrete de Rham complex which, as we shall
see later, will be the basis of devising the SAFE schemes for $H(D)$
convection-diffusion problems.

Let the domain $\Omega$ is a bounded polyhedron in $\mathbb{R}^\ell$
($\ell=2,3$).  Given $p\in [1,\infty]$ and an integer $m\geq 0$, we use
the usual notation $W^{m,p}(\Omega), \|\cdot\|_{m,p,\Omega},
|\cdot|_{m,p,\Omega}$ to denote the usual Sobolev space, norm and
semi-norm, respectively.  When $p = 2$, $H^m(\Omega):=
W^{m,p}(\Omega)$ with $|\cdot|_{m,\Omega}:= |\cdot|_{m,2,\Omega}$ and
$\|\cdot\|_{m,\Omega} = \|\cdot\|_{m,2,\Omega}$.  Let $\mathcal{T}_h$
be a conforming and shape-regular triangulations of $\Omega$. $h_T$ is
the diameter of $T$, and $h := \max_{T\in \mathcal{T}_h} h_T$.

Throughout this paper, we assume the dimension $\ell=3$, although all
the results extend without major modifications to the case in which
$\ell=2$.

\subsection{Model problems}
\commentone{
Given a vector field $\beta(x)$, in this paper, we consider the
general convection-diffusion problem in the following three forms: 
\begin{subequations}
\begin{enumerate}
\item $H(\grad)$ convection-diffusion problem: 
\begin{equation} \label{equ:grad-conservative}
\left\{
\begin{array}{ll}
-\div(\alpha \nabla u + \beta u) + \gamma u = f &\quad \text{in }
\Omega,\\ 
u = 0 & \quad \text{on }\Gamma_0 \subset \partial \Omega, \\ 
(\alpha \nabla u + \beta u)\cdot n = g & \quad \text{on }
\Gamma_{N} = \partial \Omega \setminus \Gamma_0. \\ 
\end{array}
\right.
\end{equation} 
\item $H(\curl)$ convection-diffusion problem: 
\begin{equation} \label{equ:curl-conservative}
\left\{
\begin{array}{ll}
\nabla \times(\alpha \nabla \times u + \beta \times u) + \gamma u = f
&\quad \text{in }
\Omega,\\ 
n \times u = 0 & \quad \text{on }\Gamma_0 \subset \partial \Omega, \\ 
n \times (\alpha \nabla \times u + \beta \times u) = g & \quad \text{on }
\Gamma_{N} = \partial \Omega \setminus \Gamma_0. \\ 
\end{array}
\right.
\end{equation} 
\item $H(\div)$ convection-diffusion problem: 
\begin{equation} \label{equ:div-conservative}
\left\{
\begin{array}{ll}
-\nabla (\alpha \nabla \cdot u + \beta \cdot u) + \gamma u = f
&\quad \text{in }
\Omega,\\ 
u \cdot n = 0 & \quad \text{on }\Gamma_0 \subset \partial \Omega, \\ 
\alpha \nabla \cdot u + \beta \cdot u = g & \quad \text{on }
\Gamma_{N} = \partial \Omega \setminus \Gamma_0. \\ 
\end{array}
\right.
\end{equation} 
\end{enumerate}
\end{subequations}
Here, $n$ is the unit outer vector normal to $\partial \Omega$. 
To allow a parallel treatment of the above forms, we unify the
presentation of
\eqref{equ:grad-conservative}--\eqref{equ:div-conservative} as follows 
}
\begin{equation} \label{equ:conv-diff}
\left\{
\begin{array}{ll}
\mathcal{L}u := d^*(\alpha du + i^*_{\beta} u) + \gamma u = f
&\quad \text{in } \Omega,\\ 
\tr(u) = 0 & \quad \text{on }\Gamma_0 \subset \partial \Omega, \\ 
\tr[\star (\alpha du + i_{\beta}^*u)] = g & \quad \text{on }
\Gamma_{N} = \partial \Omega \setminus \Gamma_0. \\ 
\end{array}
\right.
\end{equation}
\commentone{  
Here, the unknown $u$ is a vector proxy of differential $k$-form in
3D. In terms of vector proxy in 3D, $d = {\rm grad}$ (or $\nabla$)
when $k=0$, $d = {\rm curl}$ (or $\nabla \times$) when $k=1$, and $d =
{\rm div}$ (or $\nabla \cdot$) when $k=2$.  $d^*$, $i_\beta$,
$i_\beta^*$, $\star$ and $\tr$ denote the vector proxy of
coderivative, contraction, dual of contraction (or the limiting of
extrusion \cite{bossavit2003extrusion, heumann2011eulerian}),
Hodge star, and trace operator in 3D, respectively (cf.
\cite{arnold2006finite}). The correspondences between the exterior
calculus notations and the expressions for vector proxies
can be easily obtained by comparing
\eqref{equ:grad-conservative}-\eqref{equ:div-conservative} and 
\eqref{equ:conv-diff}, and are summarized in Table \ref{tab:proxy-3D}.
}  

\begin{table}[!htbp]
\centering 
\commentone{
\begin{tabular}{m{0.6cm}m{3.0cm}m{3.0cm}m{1.2cm}m{1.2cm}m{1.2cm}}
\hline  
$k$ & $du$ & $d^*u$ & $i_{\beta}u$ & $i_{\beta}^* u$ & $\mathrm{tr}$\\
\hline\hline
$0$ & $\grad u$ (or $\nabla u$) & $-\div u$ (or $-\nabla \cdot u$) & & $\beta u$ & $u$ \\ 
$1$ & $\curl u$ (or $\nabla \times u$) & $\curl u$ (or $\nabla \times
    u$) & $\beta\cdot u$ &
$\beta\times u$ & $n\times u$ \\ 
$2$ & $\div u$ (or $\nabla \cdot u$) & $-\grad u$ (or $-\nabla u$) & $-\beta\times u$ &
$\beta\cdot u$ & $u\cdot n$ \\ 
$3$ & & & $\beta u$ &
&  \\ \hline 
\end{tabular}
}
\caption{\commentone{Translation table for unifying notational
  framework.}}
\label{tab:proxy-3D}
\end{table}

We also consider the following boundary value problems that are
associated with the dual of the operator $\mathcal{L}$ in
\eqref{equ:conv-diff}:
\begin{equation} \label{equ:conv-diff-dual}
\left\{
\begin{array}{ll}
\mathcal{L}^*u := d^*(\alpha du) + i_{\beta} du + \gamma u = f
&\quad \text{in } \Omega,\\ 
\tr(u) = 0 & \quad \text{on }\Gamma_0 \subset \partial \Omega, \\ 
\tr[\star (\alpha du)] = g & \quad \text{on }
\Gamma_{N} = \partial \Omega \setminus \Gamma_0. \\ 
\end{array}
\right.
\end{equation}
\commentone{Note that the model problem \eqref{equ:conv-diff-dual} in
3D corresponds to \eqref{eq:scalar-cd} and \eqref{eq:E-cd} when $k=0$
and $k=1$, respectively.}

For both of the above model problems, we assume that $\Gamma_0$ has
positive surface measure. The coefficients are assumed to satisfy
$\alpha(x)\in W^{1,\infty}(\Omega;\mathbb{R})$,
$\beta(x)=(\beta_i(x))\in W^{1,\infty}(\Omega; \mathbb R^n)$ and
$\gamma(x) \in L^\infty(\Omega; \mathbb{R})$. We further assume that
$\alpha(x)$ and $\gamma(x)$ are uniformly positive, i.e.,
$$ 
0 < \alpha_0 \leq \alpha(x) \leq \alpha_1\quad \text{and} \quad
0 < \gamma_0 \leq \gamma(x).
$$ 

Define the space
$$ 
V := \{w \in H\Lambda^k(\Omega): ~ \tr(w)= 0~\text{on } \Gamma_0\},
$$ 
equipped with the norm $\|w\|_{H\Lambda,\Omega}^2 :=
\|w\|_{0,\Omega}^2 + \|dw\|_{0,\Omega}^2$.  Then, the variational
formulation for \eqref{equ:conv-diff} is: Find $u \in V$ such that 
\begin{equation} \label{equ:variational} 
a(u, v) = F(v) \qquad \forall v\in V,
\end{equation} 
where 
$$ 
a(u, v) := (\alpha du + i_{\beta}^* u, dv) + (\gamma u, v), \quad
F(v) := (f, v) + \langle g, \tr(v) \rangle_{\Gamma_N}.
$$ 
And the variational formulation for \eqref{equ:conv-diff-dual} is:
Find $u \in V$ such that 
\begin{equation} \label{equ:variational-dual}
a^*(u, v) = F(v) \qquad \forall v\in V,
\end{equation}
where $a^*(u,v) := a(v, u) = (du, \alpha dv + i_{\beta}^*v) + (\gamma
u, v)$.  We make the following assumptions for the well-posedness of 
convection-diffusion problems \eqref{equ:conv-diff} and
\eqref{equ:conv-diff-dual}.
\begin{assumption}[Well-posedness]\label{asp} The operators 
$$
\mathcal{L}, \mathcal{L}^*: V \mapsto V' 
$$ 
are isomorphisms. Namely both \eqref{equ:conv-diff} and
\eqref{equ:conv-diff-dual} are uniquely solvable. Furthermore, there
exists a constant $c_0>0$ (which may depend on $\alpha$, $\beta$,
$\gamma$) such that 
\begin{equation} \label{equ:inf-sup}
\inf_{u \in V}\sup_{v\in V} 
\frac{a(u,v)}{\|u\|_{H\Lambda,\Omega}\|v\|_{H\Lambda,\Omega}}
= \inf_{u\in V}\sup_{v\in V}\frac{a^*(u,v)}{\|u\|_{H\Lambda,\Omega}
\|v\|_{H\Lambda,\Omega}} = c_0>0.
\end{equation}
\end{assumption}

\begin{remark} The above assumption holds for $H^1$
convection-diffusion problem by using the weak maximum principle (cf.
\cite[Section 8.1]{gilbarg2015elliptic}) and Fredholm alternative
theory (cf. \cite[Theorem 4, pp. 303]{evans2010partial}). A sufficient
condition for the above assumption is that $4\alpha(x)\gamma(x) \geq
|\beta(x)|_{l^2}^2$ for all $x \in \Omega$, \commentone{by using the
Cauchy-Schwarz inequality}.
\end{remark}

\begin{remark}
In \cite{heumann2013stabilized, heumann2015stabilized}, the
\commentone{proxy of} Lie convection problems considered as the model
problems. Thanks to the theory of Friedrichs' symmetric operators
\cite{friedrichs1958symmetric}, a sufficient condition that depends
only on $\beta(x)$ and $\gamma(x)$ can be given for the purpose of
coercivity. In this paper, we only consider the model problems
\eqref{equ:conv-diff} and \eqref{equ:conv-diff-dual}, which are the
simplest ones to present the features of SAFE schemes. The SAFE
schemes for the \commentone{proxy of Lie convection and their
applications} will be reported in the subsequent work.
\end{remark}

\subsection{$\mathcal{P}_1^-\Lambda^k$ discrete de Rham complex}
In this paper, we confine to the $\mathcal{P}_1^-\Lambda^k$ discrete
de Rham complex,
i.e.
\begin{equation} \label{equ:discrete-complex} 
\mathcal{P}_1^-\Lambda^0 \xrightarrow{\grad} \mathcal{P}_1^- \Lambda^1
\xrightarrow{\curl} \mathcal{P}_1^-\Lambda^2 \xrightarrow{\div}
\mathcal{P}_1^-\Lambda^3.
\end{equation} 
The local basis functions of $\mathcal{P}_1^-\Lambda^k(T)$, which are
associated with the sub-simplexes of $T$, are denoted by
$\varphi_{a}$, $\varphi_E$, $\varphi_F$ and $\varphi_T$, respectively.
The local degrees of freedom satisfy (\commentone{see Fig.
\ref{fig:dofs}}) 
$$ 
\begin{aligned}
l_{a}^0(\varphi_{a'}) = \varphi_{a'}(a) = \delta_{aa'}, &
\qquad l_{E}^1(\varphi_{E'}) = \int_{E} \varphi_{E'} \cdot \tau_{E} =
\delta_{EE'}, \\ 
l_{F}^2(\varphi_{F'}) = \int_{F} \varphi_{F'} \cdot n_{F} =
\delta_{FF'}, & 
\qquad l_{T}^3(\varphi_{T'}) = \int_{T} \varphi_{T'} = \delta_{TT'}. 
\end{aligned}
$$ 
\vspace{-4mm}
\begin{figure}[!htbp]
  \centering
  \includegraphics[width=0.20\textwidth]{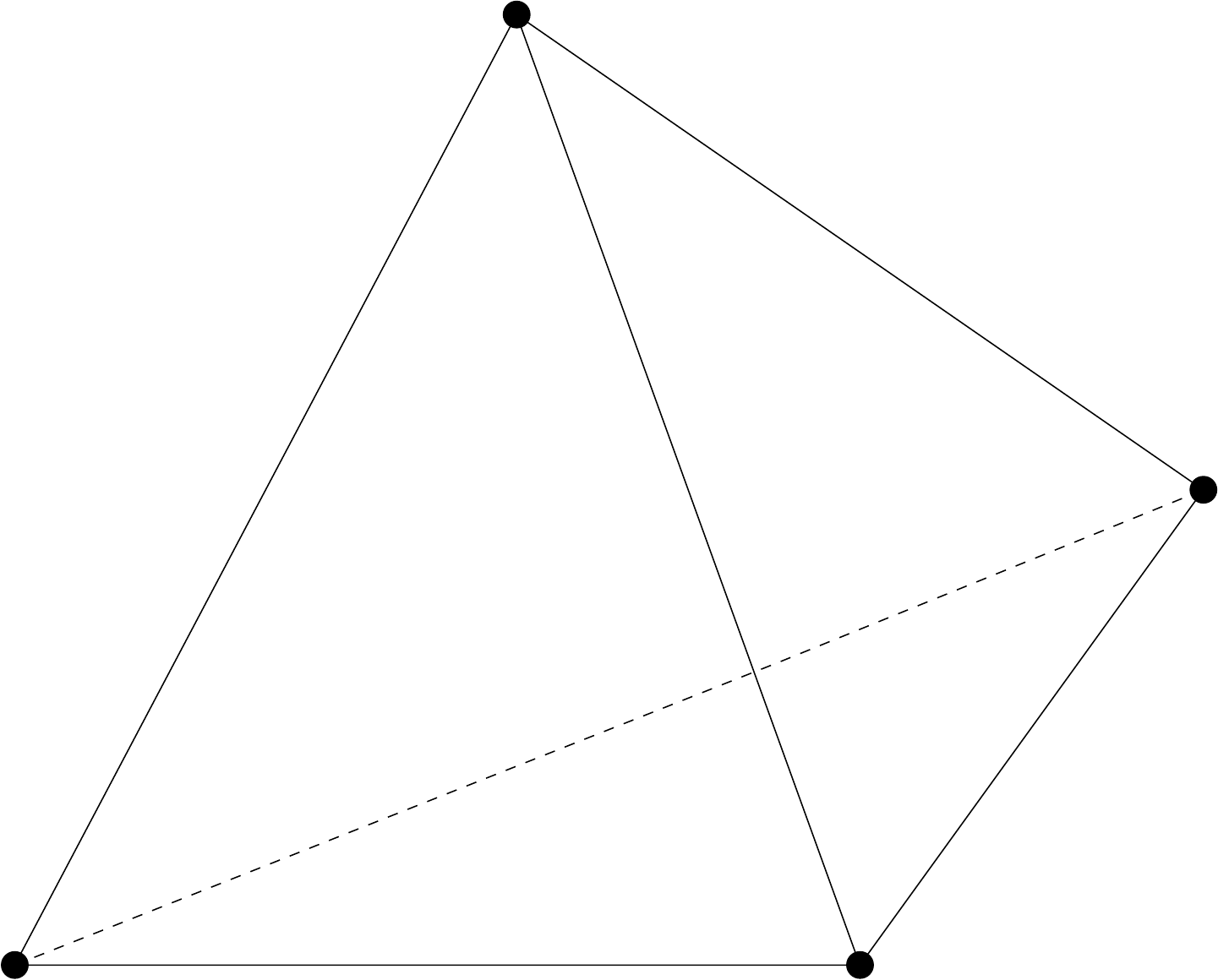}
  \includegraphics[width=0.20\textwidth]{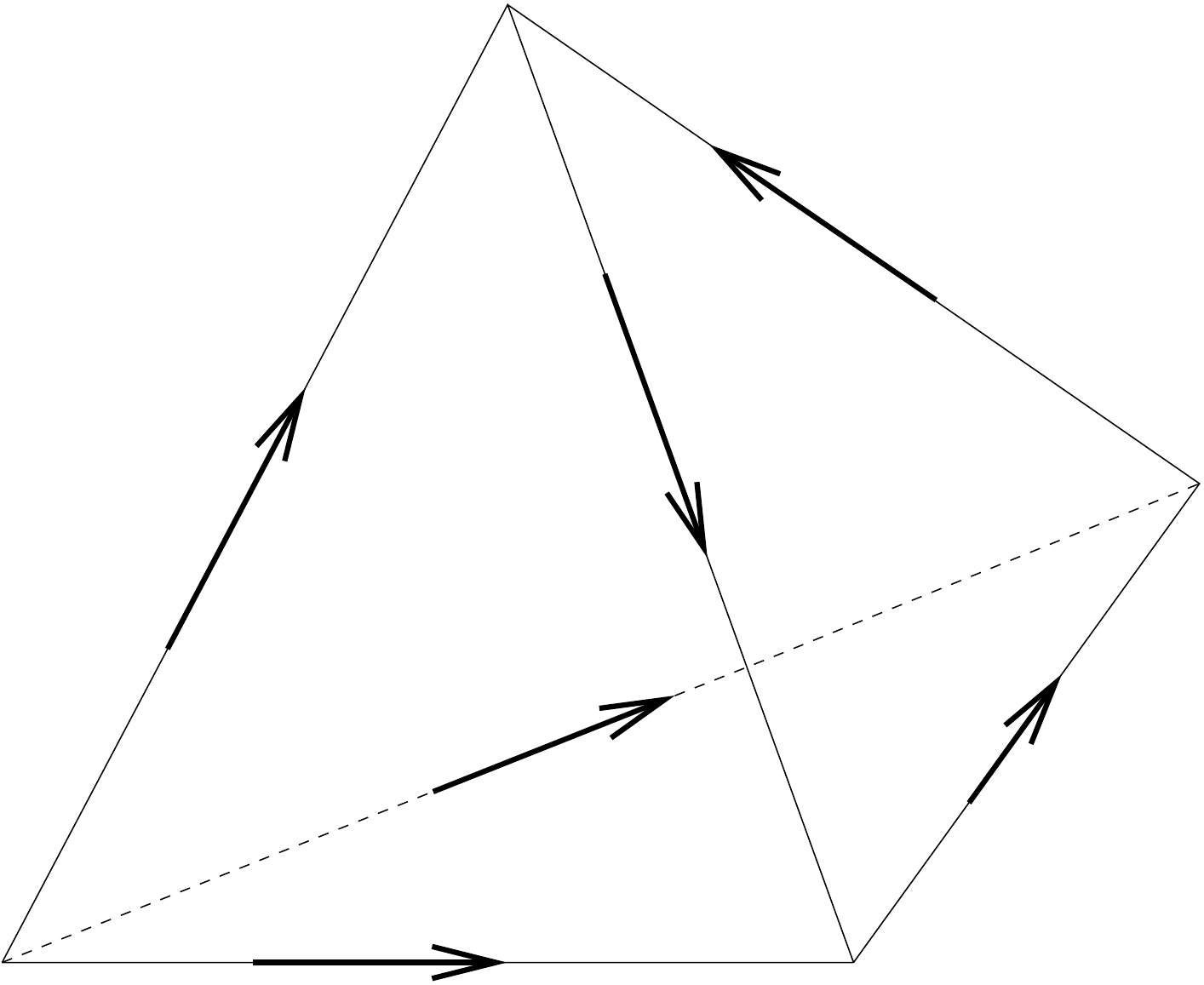}
  \includegraphics[width=0.20\textwidth]{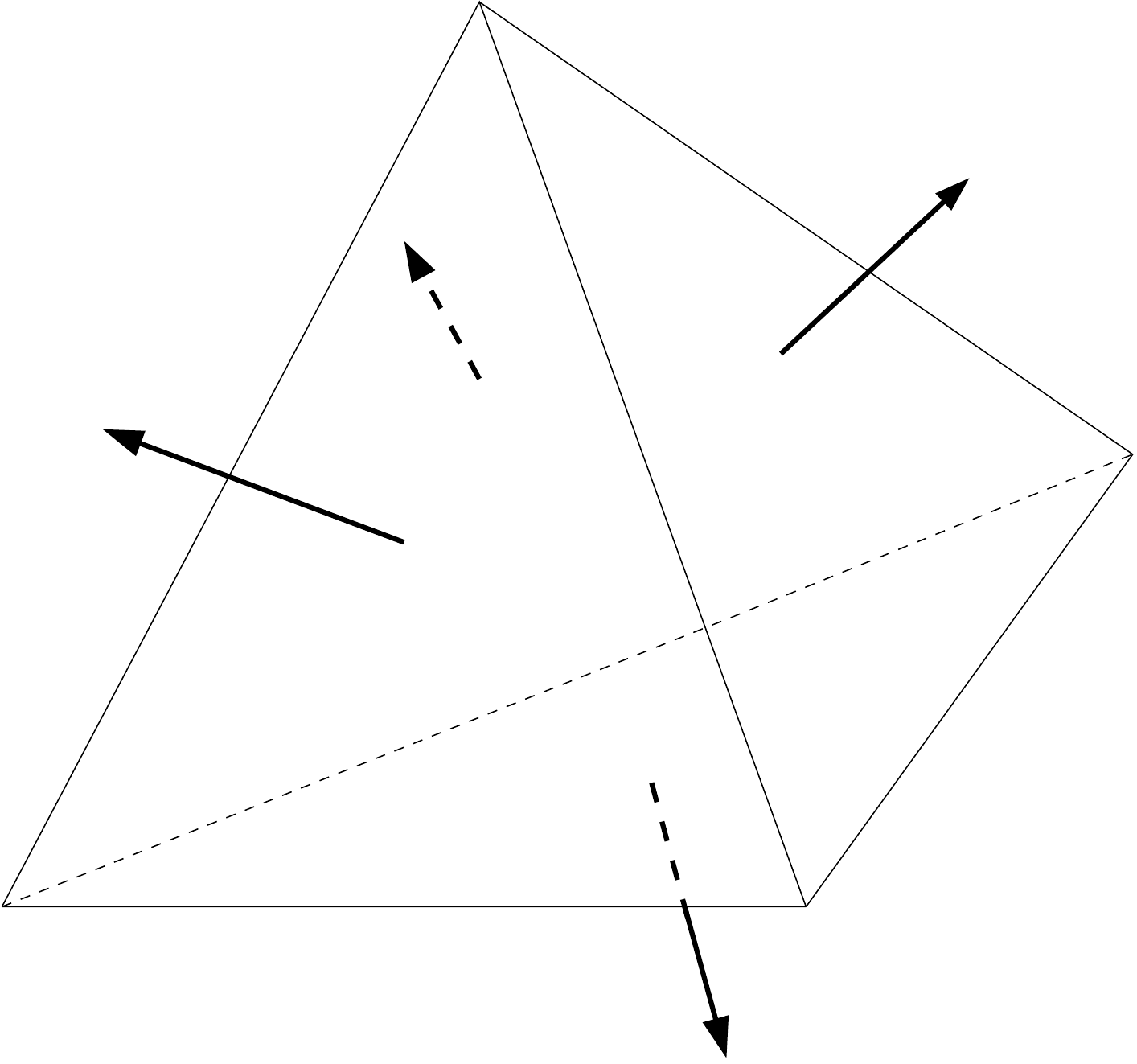}
  \includegraphics[width=0.20\textwidth]{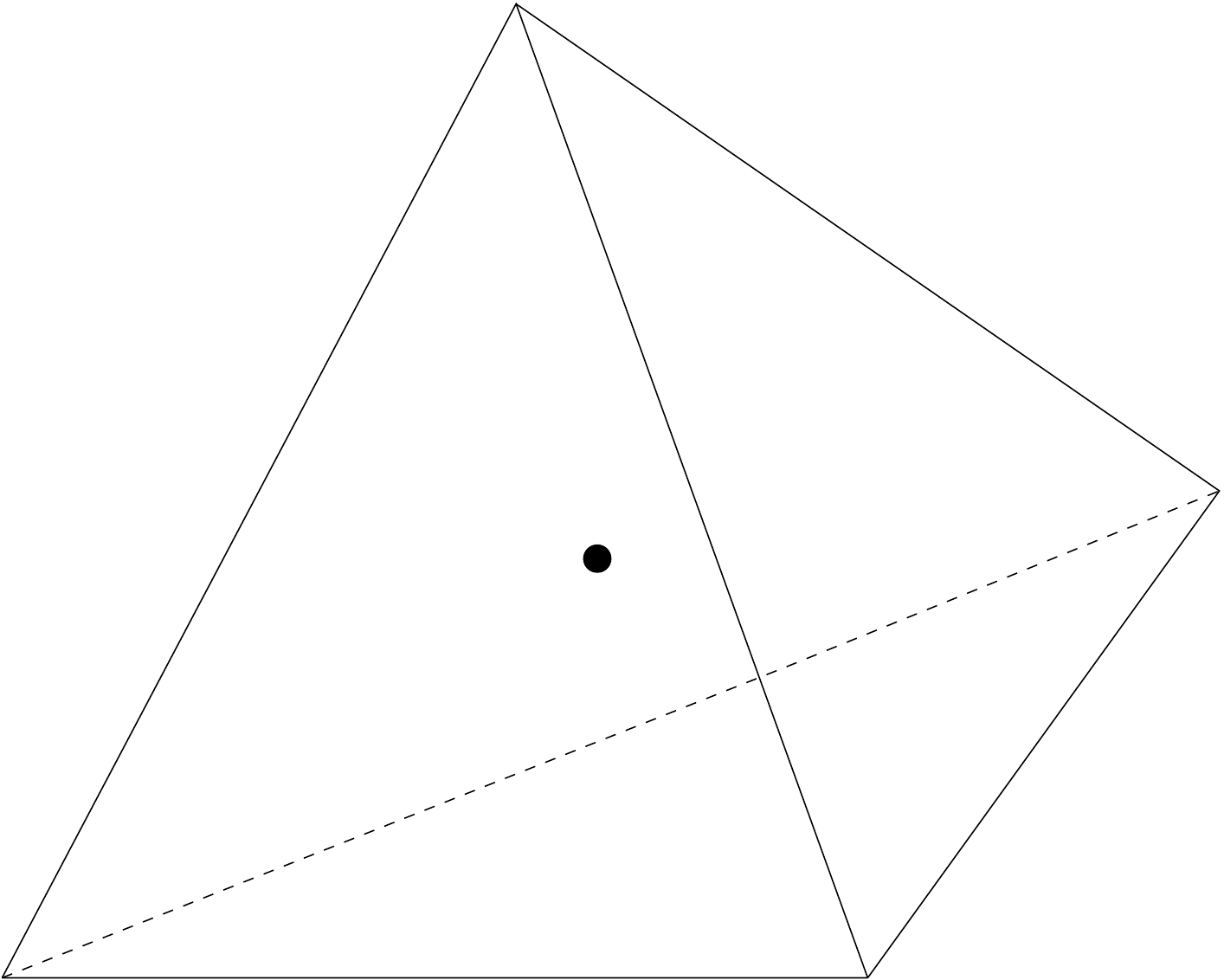}
  \caption{\commentone{Symbolic notation for local degrees of freedom for
$\mathcal{P}_1^-\Lambda^0$, $\mathcal{P}_1^-\Lambda^1$,
  $\mathcal{P}_1^-\Lambda^2$, and $\mathcal{P}_1^{-}\Lambda^3$ (left
to right)}}
  \label{fig:dofs}
\end{figure}
\vspace{-4mm}

Denote by $\mathcal{S}_T^k$ the set of sub-simplexes of dimension $k$.
Thus, the set local degrees of freedom of
$\mathcal{P}_1^-\Lambda^k(T)$ can be written as $\{l_S^k(\cdot)~|~
S\in \mathcal{S}_T^k \}$.  Then, the local canonical interpolation
operator can be written as 
\begin{equation} \label{equ:interpolate}
\Pi_T^k v:= \sum_{S\in \mathcal{S}_T^k} l_S^k(v) \varphi_S. 
\end{equation}
We also define $\delta_S(v) = l_S^{k+1}(d v)$ for any $v\in
H\Lambda^k(T)$ and $S\in \mathcal{S}_T^{k+1}$.

\section{\commentone{Local Discretization of Convection-diffusion
Operators}} \label{sec:local-operator}
In this section, we explain the idea of exponential fitting and
construct the local simplex-averaged operators. 

\subsection{A crucial identity}
Let $\theta = \beta/\alpha$.  We first consider the case in which
$\theta$ is a constant. Let $J_{\theta}^ku = d^k u + i_{\theta}^*u$.
In \cite{lazarov2012exponential}, it is shown that, when $k=0$,
$$ 
J_{\theta}^{0}u = \nabla u + \theta u = \exp(-\theta \cdot x)
\nabla \left[ \exp(\theta \cdot x)u \right],
$$ 
which motivates the following lemma serving as the starting point of
this paper.
\begin{lemma} \label{lem:exp-fitting-k} 
Assume that $\theta$ is a constant vector. It holds that 
\begin{equation} \label{equ:exponential-fitting-k}
J_{\theta}^{k}u =  d^k u + i_{\theta}^*u =
\exp(-\theta \cdot x) d^k \left[
\exp(\theta \cdot x) u \right].
\end{equation}
\end{lemma}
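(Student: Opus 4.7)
The plan is to recognize the identity as a Leibniz rule for the exterior derivative (equivalently, the product rules for $\grad$, $\curl$, $\div$) applied to the product $\exp(\theta\cdot x)\, u$, combined with the translation table between $\theta^\flat\wedge(\cdot)$ and the vector-proxy operator $i_\theta^*$.

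First, I would work in the language of differential forms. Since $\theta$ is constant, a direct computation gives $d\bigl(\exp(\theta\cdot x)\bigr) = \exp(\theta\cdot x)\,\theta^\flat$, where $\theta^\flat$ is the $1$-form dual to $\theta$. Applying the graded Leibniz rule for $d$ to the $k$-form $\exp(\theta\cdot x)\, u$ then yields
$$
d\bigl[\exp(\theta\cdot x)\, u\bigr] \;=\; \exp(\theta\cdot x)\,\theta^\flat\wedge u \;+\; \exp(\theta\cdot x)\, du.
$$
Multiplying through by $\exp(-\theta\cdot x)$ produces $\theta^\flat\wedge u + du$, which is exactly the intrinsic form of the right-hand side of \eqref{equ:exponential-fitting-k}.

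Second, I would match this to the vector-proxy formulation by consulting Table~\ref{tab:proxy-3D}. Inspection shows that $\theta^\flat\wedge u$ corresponds, successively, to $\theta u$ for $k=0$, to $\theta\times u$ for $k=1$, and to $\theta\cdot u$ for $k=2$ — precisely the vector proxies of $i_\theta^* u$ in each case. This identification is an elementary fact about Hodge duality (the wedge with $\theta^\flat$ is the adjoint of contraction by $\theta$ with respect to the Hodge star), so the abstract identity descends to the claimed one in each of the three proxy cases.

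Alternatively, and perhaps more transparently for readers unused to exterior calculus, I would simply verify the three cases directly using the standard product rules: $\nabla(fg)=f\nabla g+g\nabla f$, $\nabla\times(f\,\mathbf{v})=f\,\nabla\times\mathbf{v}+\nabla f\times\mathbf{v}$, and $\nabla\cdot(f\,\mathbf{v})=f\,\nabla\cdot\mathbf{v}+\nabla f\cdot\mathbf{v}$, applied with $f=\exp(\theta\cdot x)$ (so $\nabla f = \exp(\theta\cdot x)\,\theta$). Each of the three identities then collapses, after dividing by $\exp(\theta\cdot x)$, to $d u + i_\theta^* u$ in exact agreement with Table~\ref{tab:proxy-3D}. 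There is no real obstacle here — the computation is a one-line product rule — but care is required to check the sign and arrangement of each proxy term so that the unified statement \eqref{equ:exponential-fitting-k} is seen to hold uniformly for $k=0,1,2$.
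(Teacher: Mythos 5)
Your proof is correct. Your second, ``more transparent'' route --- verifying the three cases directly with the product rules for $\nabla$, $\nabla\times$, and $\nabla\cdot$ applied to $f=\exp(\theta\cdot x)$ --- is exactly what the paper does: its proof expands $d^k[\exp(\theta\cdot x)u]$ componentwise for $k=0,1,2$ and cancels the exponential. Your primary route, via the graded Leibniz rule $d(fu)=df\wedge u+f\,du$ together with the identification of $\theta^\flat\wedge(\cdot)$ with the vector proxies of $i_\theta^*$ from Table~\ref{tab:proxy-3D}, is a genuinely more unified packaging: it proves the identity once for all $k$ and makes clear why the statement holds uniformly (the paper itself gestures at this in the remark following the lemma, citing the gauge-theoretic viewpoint, but does not carry it out). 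What the intrinsic argument buys is dimension- and degree-independence and a conceptual explanation of the exponential conjugation; what the case-by-case computation buys is that a reader need not trust the sign conventions relating $i_\theta^*$ to $\theta^\flat\wedge$, since each proxy identity is checked explicitly. Either route is acceptable; if you keep the intrinsic one as the main proof, you should at least record the three proxy identifications $\theta^\flat\wedge u \leftrightarrow \theta u,\ \theta\times u,\ \theta\cdot u$ explicitly, since they carry the only sign-sensitive content of the argument.
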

\begin{proof}
We prove \eqref{equ:exponential-fitting-k} case by case:
\begin{enumerate}
\item $k=0$ and $d^0 = \nabla$. It is straightforward to show that 
$$ 
\begin{aligned}
\text{R.H.S of \eqref{equ:exponential-fitting-k}} &= \exp(-\theta \cdot x) 
\begin{pmatrix}
\exp(\theta\cdot x) \partial_{x_1}u + \theta_1 \exp(\theta\cdot x)
  u \\ 
\exp(\theta\cdot x) \partial_{x_2}u + \theta_2 \exp(\theta\cdot x)
  u \\ 
\exp(\theta\cdot x) \partial_{x_3}u + \theta_3 \exp(\theta\cdot x)
  u 
\end{pmatrix} \\
&= \nabla u + \theta u.
\end{aligned}
$$ 
\item $k=1$ and $d^1 = \nabla \times$. Then, a direct calculation
shows that 
$$ 
\begin{aligned}
\text{R.H.S of \eqref{equ:exponential-fitting-k}} &= \exp(-\theta\cdot x) 
\begin{pmatrix} 
\partial_{x_2}[\exp(\theta\cdot x)u_3] -
\partial_{x_3}[\exp(\theta\cdot x)u_2] \\
\partial_{x_3}[\exp(\theta\cdot x)u_1] -
\partial_{x_1}[\exp(\theta\cdot x)u_3] \\
\partial_{x_1}[\exp(\theta\cdot x)u_2] -
\partial_{x_2}[\exp(\theta\cdot x)u_1] 
\end{pmatrix} \\ 
&= 
\begin{pmatrix}
\partial_{x_2}u_{3} - \partial_{x_3}u_{2} \\
\partial_{x_3}u_{1} - \partial_{x_1}u_{3} \\
\partial_{x_1}u_{2} - \partial_{x_2}u_{1} 
\end{pmatrix} +
\begin{pmatrix}
\theta_2u_3 - \theta_3 u_2 \\ 
\theta_3u_1 - \theta_1 u_3 \\
\theta_1u_2 - \theta_2 u_1 
\end{pmatrix} \\ 
&= \nabla \times u + \theta \times u. 
\end{aligned}
$$ 
\item $k=2$ and $d^2 = \nabla \cdot$. Clearly, 
$$ 
\text{R.H.S of \eqref{equ:exponential-fitting-k}} = \exp(-\theta \cdot
x)\sum_{i} \partial_{x_i}[\exp(\theta \cdot x)u_i] = \nabla
\cdot u + \theta\cdot u.
$$ 
\end{enumerate}
This completes the proof.
\end{proof}
\begin{remark}
The above lemma is a special case of the gauge theory in differential
geometry, see \cite{eguchi1980gravitation, christiansen2013upwinding}.
\end{remark}

Define the operator $E_{\theta}$ by $E_{\theta}u = \exp(\theta\cdot
x)u$.  Thanks to Lemma \ref{lem:exp-fitting-k}, we have the following
commutative diagram when $\theta$ is constant: 
\begin{equation} \label{equ:diagram1}
\begin{tikzcd}
C^\infty(\Omega) \arrow[r, "\grad"] 
\arrow[d, shift right, swap, "E_{-\theta}"] 
& C^\infty(\Omega;\mathbb{R}^3) \arrow[r, "\curl"] 
\arrow[d, shift right, swap, "E_{-\theta}"] 
& C^\infty(\Omega;\mathbb{R}^3) \arrow[r,"\div"] 
\arrow[d, shift right, swap, "E_{-\theta}"] 
& C^\infty(\Omega) 
\arrow[d, shift right, swap, "E_{-\theta}"] \\ 
C^\infty(\Omega) \arrow[r, "J_{\theta}^0"] 
\arrow[u, shift right, swap, "E_{\theta}"] 
& C^\infty(\Omega;\mathbb{R}^3) \arrow[r, "J_{\theta}^1"] 
\arrow[u, shift right, swap, "E_{\theta}"] 
& C^\infty(\Omega;\mathbb{R}^3) \arrow[r, "J_{\theta}^2"] 
\arrow[u, shift right, swap, "E_{\theta}"] 
& C^\infty(\Omega) 
\arrow[u, shift right, swap, "E_{\theta}"] 
\end{tikzcd}
\end{equation}
We note that a useful feature of the above commutative diagram is the
invariance against spatial translation. Namely, \eqref{equ:diagram1}
also holds when defining $E_{\theta}u = \exp(\theta\cdot (x-x_0))$ for
any $x_0 \in \mathbb{R}^n$. 

\subsection{Local simplex-averaged operators}
\commentone{In the spirit of the exponentially fitting scheme, Lemma
\ref{lem:exp-fitting-k} builds a foundation in designing a robust
scheme with the convection-dominated region.} To this end, first we
explain the simplex-averaged operators on an element $T$. Thanks to
the commutativity property that $d^k \Pi_T^k = \Pi_T^{k+1}d^k$ and
Lemma \ref{lem:exp-fitting-k}, we formally obtain 
$$
\Pi_{T}^{k+1}[\exp(\theta \cdot x) J_{\theta}^{k}u] 
= \Pi_{T}^{k+1} d^k [ \exp(\theta\cdot x) u] 
= d^k \Pi_{T}^{k} [ \exp(\theta\cdot x) u]. 
$$
Therefore, we define the operator $J_{\theta,T}^{k}$ that mimics the
above equality at discrete level.  

\begin{definition} \label{def:J-T}
The local operator $J_{\theta,T}^{k}: \mathcal{P}_1^-\Lambda^k(T)
\mapsto \mathcal{P}_1^-\Lambda^{k+1}(T)$ is defined by  
\begin{equation} \label{equ:exponential-flux}
\Pi_{T}^{k+1}[\exp(\theta \cdot x) J_{\theta,T}^{k}w_h] := d^k 
\Pi_{T}^k[\exp(\theta \cdot x) w_h] \qquad \forall w_h
\in \mathcal{P}_1^-\Lambda^{k}(T).
\end{equation} 
\end{definition}
In order to show the well-posedness of Definition \ref{def:J-T},
we first show the well-posedness of the {\it simplex-averaged
operator} given below. 
\begin{definition}
The simplex-averaged operator $H_{\theta,T}^{k}:
\mathcal{P}_1^-\Lambda^k(T) \mapsto \mathcal{P}_1^-\Lambda^k(T)$ is
defined by 
\begin{equation} \label{equ:s-avg}
H_{\theta,T}^k w_h = \sum_{S\in \mathcal{S}_T^k} \left( 
\dashint_S \exp(\theta \cdot x)
\right)^{-1} l_S^k(w_h) \varphi_S \qquad \forall w_h \in
\mathcal{P}_1^-\Lambda^k(T),
\end{equation} 
where $\dashint_S$ is the average integral on $S \in \mathcal{S}_T^k$.
\end{definition}

\begin{lemma} \label{lem:s-avg}
It holds that $H_{\theta,T}^k =
\left(\Pi_T^{k}E_{\theta}\right)^{-1}$ on
$\mathcal{P}_1^-\Lambda^k(T)$. 
\end{lemma}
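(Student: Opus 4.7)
The plan is to show, via the natural basis $\{\varphi_S\}_{S\in\mathcal{S}_T^k}$ of $\mathcal{P}_1^-\Lambda^k(T)$, that both $H_{\theta,T}^k$ and $\Pi_T^kE_\theta$ act as diagonal operators with reciprocal eigenvalues, and hence are inverse to each other. Since $\mathcal{P}_1^-\Lambda^k(T)$ is finite-dimensional, it suffices to verify the identity $(\Pi_T^kE_\theta)(H_{\theta,T}^kw_h) = w_h$ for every $w_h \in \mathcal{P}_1^-\Lambda^k(T)$.

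The central computation is to evaluate $l_S^k(E_\theta w_h)$ for $S\in\mathcal{S}_T^k$ and $w_h = \sum_{S'\in\mathcal{S}_T^k} c_{S'}\varphi_{S'}$, where $c_{S'} = l_{S'}^k(w_h)$. My claim is
\begin{equation*}
l_S^k(E_\theta w_h) \;=\; \Big(\dashint_S \exp(\theta\cdot x)\Big)\, l_S^k(w_h),
\end{equation*}
which I would establish by the standard pointwise property of Whitney forms on sub-simplexes: for $S'\neq S$ in $\mathcal{S}_T^k$, the relevant ``$S$-trace'' of $\varphi_{S'}$ vanishes on $S$ (e.g.\ $\varphi_{E'}\cdot\tau_E\equiv 0$ on $E$ for $E'\neq E$ in the $k=1$ case), while $\varphi_S$ itself has constant $S$-trace equal to $1/|S|$ (with the convention $|S|=1$ when $k=0$). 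These facts are verified case by case from the explicit formulas $\varphi_{ij}=\lambda_i\nabla\lambda_j-\lambda_j\nabla\lambda_i$ for $k=1$, the analogous Raviart--Thomas basis for $k=2$, and the trivial cases $k=0,3$; in each case one uses $\nabla\lambda_p\cdot(a_q-a_p)=-1$ together with the vanishing of other barycentric coordinates on $S$. With this, the integral defining $l_S^k(E_\theta w_h)$ collapses to $c_S\cdot(1/|S|)\int_S \exp(\theta\cdot x)$, which equals the right-hand side above.

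Granted this identity, we get directly
\begin{equation*}
\Pi_T^k E_\theta\varphi_S \;=\; \sum_{S'\in\mathcal{S}_T^k} l_{S'}^k(E_\theta\varphi_S)\,\varphi_{S'} \;=\; \Big(\dashint_S \exp(\theta\cdot x)\Big)\varphi_S,
\end{equation*}
so $\Pi_T^kE_\theta$ is diagonal in the basis $\{\varphi_S\}$ with eigenvalues $\dashint_S\exp(\theta\cdot x)$. By its definition in \eqref{equ:s-avg}, $H_{\theta,T}^k$ is diagonal in the same basis with eigenvalues $\bigl(\dashint_S\exp(\theta\cdot x)\bigr)^{-1}$. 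Composing yields the identity on $\mathcal{P}_1^-\Lambda^k(T)$, proving the lemma.

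The main obstacle, and the one place where a little care is needed, is the pointwise vanishing of the $S$-trace of $\varphi_{S'}$ for $S'\neq S$; this is what allows the ``mixing'' terms in $l_S^k(E_\theta w_h)$ to drop out and leaves only the clean scaling $\dashint_S\exp(\theta\cdot x)$. Once this Whitney-form fact is isolated, the remainder of the argument is purely algebraic diagonalization.
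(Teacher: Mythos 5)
Your proposal is correct and follows essentially the same route as the paper's proof: both rest on the Whitney-form property that the $S$-trace of $\varphi_{S'}$ vanishes on $S$ for $S'\neq S$ and is constantly $1/|S|$ for $S'=S$, so that $\Pi_T^kE_\theta$ acts diagonally with eigenvalues $\dashint_S\exp(\theta\cdot x)$, reciprocal to those of $H_{\theta,T}^k$. No issues.
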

\begin{proof}
Note that the basis functions of $\mathcal{P}_1^-\Lambda^k(T)$ satisfy 
$$ 
\varphi_{a}(a') = \delta_{aa'}, \qquad \varphi_{E}\cdot
\tau_{E'} = \frac{\delta_{EE'}}{|E'|}, \qquad \varphi_F\cdot n_{F'} =
\frac{\delta_{FF'}}{|F'|}, \qquad \varphi_T = \frac{1}{|T|}. 
$$ 
Therefore,  for any $w_h \in \mathcal{P}_1^-\Lambda^k(T)$,
$$ 
\begin{aligned}
\Pi_T^k(E_{\theta} w_h) &=
\sum_{S' \in \mathcal{S}_T^k}  l_{S'}^k \left( \exp(\theta\cdot x)
\sum_{S\in \mathcal{S}_T^k} l_S^k(w_h) \varphi_S \right) \varphi_{S'}
\\
&= \sum_{S' \in \mathcal{S}_T^k} \left( \dashint_{S'}
\exp(\theta\cdot x) \right) l_{S'}^k(w_h) \varphi_{S'},
\end{aligned}
$$ 
which implies that $H_{\theta,T}^k\Pi_T^k E_{\theta}w_h = w_h$.
\end{proof}
In light of Lemma \ref{lem:s-avg}, $J_{\theta,T}^k$ in
\eqref{equ:exponential-flux} can be written explicitly in terms of
the simplex-averaged operator 
\begin{equation} \label{equ:exp-flux2} 
J_{\theta,T}^k = \left(\Pi_T^{k+1}E_{\theta}\right)^{-1} d^k
\Pi_T^k E_{\theta} = H_{\theta,T}^{k+1} d^k \Pi_T^k E_{\theta}.
\end{equation}
Further, we can define the interpolations
$\tilde{\Pi}_{\theta,T}^k: \Lambda^k(T) \mapsto
\mathcal{P}_1^-\Lambda^k(T)$ by 
\begin{equation} \label{equ:tilde-proj}
\tilde{\Pi}_{\theta,T}^k v := H_{\theta,T}^k \Pi_T^k E_{\theta}v 
= \sum_{S \in \mathcal{S}_T^k} \frac{l_S^k(\exp(\theta\cdot
x)v)}{\dashint_S \exp(\theta\cdot x)} \varphi_S.
\end{equation}
In summary, we depict the 3D-commutative diagram in Fig.
\ref{fig:3D-commutative}. The exactness of discrete de Rham complex 
and Lemma \ref{lem:s-avg} lead to the following corollary.  

\begin{figure}
\begin{center}
\begin{tikzpicture}[node distance=11.66em,
  back line/.style={densely dotted},
  cross line/.style={preaction={draw=white, -,line width=6pt}}]
  \node (Jk) {${\Lambda}^k(T)$};
  \node [right of=Jk] (Jk1) {${\Lambda}^{k+1}(T)$};
  \node [below of=Jk] (DJk) {$\mathcal{P}_1^-\Lambda^k(T)$};
  \node [right of=DJk] (DJk1) {$\mathcal{P}_1^-\Lambda^{k+1}(T)$};
  
  \draw[->] (Jk) to node [above] {\scriptsize $J_{\theta}^k$} (Jk1); 
  \draw[->] (DJk) to node [above] {\scriptsize $J_{\theta,T}^k$} (DJk1); 
  \draw[->] (Jk) to node [left] {$\scriptsize \tilde{\Pi}_{\theta,T}^k$} (DJk);
  \draw[->] (Jk1) to node [right] {$\scriptsize \tilde{\Pi}_{\theta,T}^{k+1}$} (DJk1);
 
  \node (Hk) [right of=Jk, above of=Jk, node distance=5em]
{$\Lambda^k(T)$};
  \node [right of=Hk] (Hk1) {$\Lambda^{k+1}(T)$};
  \node [below of=Hk] (DHk) {$\mathcal{P}_1^-\Lambda^k(T)$};
  \node [right of=DHk] (DHk1) {$\mathcal{P}_1^-\Lambda^{k+1}(T)$};

  \draw[dashed,->] (DHk) to node [above] {\scriptsize $d^k$} (DHk1); 
  \draw[dashed,->] (Hk) to node [above] {\scriptsize $d^k$} (Hk1); 
  \draw[dashed,->] (Hk) to node [left] {\scriptsize $\Pi_T^k$} (DHk);
  \draw[dashed,->] (Hk1) to node [right] {\scriptsize $\Pi_T^{k+1}$} (DHk1);

  \draw[->] (Hk.220) to node [left] {\scriptsize $E_{-\theta}$} (Jk.50);
  \draw[->] (Jk.33) to node [right] {\scriptsize $E_{\theta}$} (Hk.240);
  \draw[->] (Hk1.220) to node [left] {\scriptsize $E_{-\theta}$} (Jk1.50);
  \draw[->] (Jk1.33) to node [right] {\scriptsize $E_{\theta}$} (Hk1.240);

  \draw[->] (DHk.220) to node [left] {\scriptsize $H_{\theta,T}^k$} (DJk.50);
  \draw[->] (DJk.33) to node [right] {\scriptsize $\Pi_T^{k}E_{\theta}$} (DHk.240);
  \draw[->] (DHk1.220) to node [left] {\scriptsize $H_{\theta,T}^{k+1}$} (DJk1.50);
  \draw[->] (DJk1.33) to node [right] {\scriptsize $\Pi_T^{k+1}E_{\theta}$} (DHk1.240);
\end{tikzpicture}
\end{center}
\caption{3D-commutative diagram, the front and above diagrams require
$\theta$ to be constant.}
\label{fig:3D-commutative}
\end{figure}
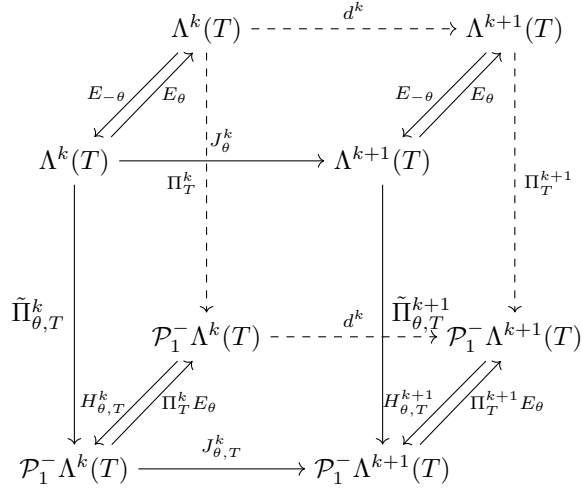

\begin{corollary}
It holds that $J_{\theta,T}^{k+1}J_{\theta,T}^k = 0$.
\end{corollary}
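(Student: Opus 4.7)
The plan is to compose the two operators using the explicit factorization from \eqref{equ:exp-flux2} and cancel the middle pair via Lemma \ref{lem:s-avg}, reducing the claim to $d^{k+1}d^k = 0$ in the discrete de Rham complex.

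First, I would apply \eqref{equ:exp-flux2} twice to write
$$
J_{\theta,T}^{k+1}J_{\theta,T}^{k}
= \bigl(H_{\theta,T}^{k+2}\, d^{k+1}\,\Pi_T^{k+1} E_{\theta}\bigr)\circ
  \bigl(H_{\theta,T}^{k+1}\, d^{k}\,\Pi_T^{k} E_{\theta}\bigr)
= H_{\theta,T}^{k+2}\, d^{k+1}\,\Pi_T^{k+1} E_{\theta}\, H_{\theta,T}^{k+1}\, d^{k}\,\Pi_T^{k} E_{\theta}.
$$
The middle pair is the composition $\Pi_T^{k+1} E_\theta \circ H_{\theta,T}^{k+1}$ acting on an element of $\mathcal{P}_1^-\Lambda^{k+1}(T)$, since $d^k\Pi_T^k E_\theta$ lands in that space. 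By Lemma \ref{lem:s-avg} the operator $H_{\theta,T}^{k+1}$ is the inverse of $\Pi_T^{k+1}E_\theta$ on $\mathcal{P}_1^-\Lambda^{k+1}(T)$, so this middle pair collapses to the identity.

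After cancellation the composition reduces to
$$
J_{\theta,T}^{k+1}J_{\theta,T}^{k} = H_{\theta,T}^{k+2}\, (d^{k+1}\circ d^{k})\, \Pi_T^{k}E_{\theta}.
$$
Since $\mathcal{P}_1^-\Lambda^\bullet(T)$ is a cochain complex, $d^{k+1}\circ d^{k}= 0$ on $\mathcal{P}_1^-\Lambda^k(T)$, which gives the desired identity.

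There is essentially no serious obstacle here; the only point that needs care is that Lemma \ref{lem:s-avg} identifies $H_{\theta,T}^{k+1}$ with $(\Pi_T^{k+1}E_\theta)^{-1}$ only on the finite element space $\mathcal{P}_1^-\Lambda^{k+1}(T)$, so I should explicitly note that the argument of $H_{\theta,T}^{k+1}$ is of the form $d^{k}\Pi_T^{k}E_\theta w_h \in \mathcal{P}_1^-\Lambda^{k+1}(T)$ for any $w_h\in \mathcal{P}_1^-\Lambda^{k}(T)$, so that the cancellation is justified in the right space.
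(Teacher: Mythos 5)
Your proof is correct and follows essentially the same route the paper intends: factor each $J_{\theta,T}$ via \eqref{equ:exp-flux2}, cancel the middle pair $\Pi_T^{k+1}E_{\theta}\circ H_{\theta,T}^{k+1}$ on $\mathcal{P}_1^-\Lambda^{k+1}(T)$ using Lemma \ref{lem:s-avg} (where the one-sided inverse of the lemma is automatically two-sided, both maps being diagonal bijections of a finite-dimensional space), and invoke $d^{k+1}\circ d^{k}=0$ for the discrete complex. Your explicit remark that the argument of $H_{\theta,T}^{k+1}$ lies in $\mathcal{P}_1^-\Lambda^{k+1}(T)$ is exactly the care the cancellation requires.
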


\section{Simplex-averaged Finite Element Methods} \label{sec:SAFE}
In this section, we present a family of finite element approximations
for \eqref{equ:conv-diff} and \eqref{equ:conv-diff-dual}.

Thanks to the $J_{\theta,T}$ given in Definition \ref{def:J-T}, first
we introduce the following local bilinear form on a fixed element
$T\subset \mathcal{T}_h$ 
\begin{equation} \label{equ:bilinear-bT}
\tilde{a}_{T}(w_h, v_h) := (\alpha J_{\theta,T}^k w_h, d^k v_h)_T 
\quad \forall w_h, v_h \in \mathcal{P}_1^-\Lambda^k(T).
\end{equation}
We give the explicit form of \eqref{equ:bilinear-bT} in the following
theorem. 
\begin{theorem} \label{thm:bT2}
It holds that, for any $w_h, v_h \in \mathcal{P}_1^-\Lambda^k(T)$, 
\begin{equation} \label{equ:bT} 
\tilde{a}_{T}(w_h, v_h) = \sum_{S \in \mathcal{S}_T^{k+1}}
\left( \dashint_S \exp(\theta\cdot x) \right)^{-1}
\delta_S\left( 
\exp(\theta\cdot x) w_h 
\right) \commentone{(\alpha \varphi_S, d^k v_h)_T}. 
\end{equation} 
\end{theorem}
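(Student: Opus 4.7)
The plan is to directly compute $J_{\theta,T}^k w_h$ in closed form, then plug it into the local bilinear form \eqref{equ:bilinear-bT}. Essentially the identity \eqref{equ:bT} is just an unfolding of the definition, but passing through the right chain of identities requires some care.

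First I would invoke the formula
\[
J_{\theta,T}^k \;=\; H_{\theta,T}^{k+1}\, d^k\, \Pi_T^k E_\theta
\]
from \eqref{equ:exp-flux2}, applied to $w_h\in \mathcal{P}_1^-\Lambda^k(T)$. The next step is to commute $d^k$ with $\Pi_T^k$. Although $E_\theta w_h = \exp(\theta\cdot x)\,w_h$ is not a polynomial, it is smooth on $T$, so the standard commutativity $d^k\Pi_T^k = \Pi_T^{k+1}d^k$ of the $\mathcal{P}_1^-\Lambda^k$ canonical interpolation applies. This yields
\[
J_{\theta,T}^k w_h \;=\; H_{\theta,T}^{k+1}\,\Pi_T^{k+1}\, d^k\bigl(\exp(\theta\cdot x)\,w_h\bigr).
\]

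Next I would expand the composition $H_{\theta,T}^{k+1}\Pi_T^{k+1}$ by the definition \eqref{equ:s-avg} of the simplex-averaged operator. Since the degrees of freedom on $\mathcal{S}_T^{k+1}$ recover themselves under $\Pi_T^{k+1}$, i.e.\ $l_S^{k+1}(\Pi_T^{k+1}v)=l_S^{k+1}(v)$ for all admissible $v$, a clean expression emerges:
\[
J_{\theta,T}^k w_h \;=\; \sum_{S\in\mathcal{S}_T^{k+1}}
\Bigl(\dashint_S \exp(\theta\cdot x)\Bigr)^{-1}\,
l_S^{k+1}\!\bigl(d^k(\exp(\theta\cdot x) w_h)\bigr)\,\varphi_S.
\]
Using the definition $\delta_S(v) = l_S^{k+1}(dv)$, the coefficient in front of $\varphi_S$ is exactly $(\dashint_S\exp(\theta\cdot x))^{-1}\delta_S(\exp(\theta\cdot x)w_h)$.

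Finally, inserting this expression into $\tilde a_T(w_h,v_h)=(\alpha J_{\theta,T}^k w_h,d^kv_h)_T$ and pulling the scalar coefficients out of the inner product gives \eqref{equ:bT} term by term. The main (and essentially only) conceptual point to justify is the commutativity $d^k\Pi_T^k=\Pi_T^{k+1}d^k$ when applied to the non-polynomial but smooth function $\exp(\theta\cdot x)w_h$; everything else is a direct unfolding of the definitions of $H_{\theta,T}^{k+1}$, $\Pi_T^{k+1}$, and $\delta_S$.
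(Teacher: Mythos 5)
Your proposal is correct and follows essentially the same route as the paper: apply \eqref{equ:exp-flux2}, commute $d^k$ with $\Pi_T^k$ to get $H_{\theta,T}^{k+1}\Pi_T^{k+1}d^k E_\theta w_h$, expand in the basis $\{\varphi_S\}$ using the definitions of $H_{\theta,T}^{k+1}$ and $\delta_S$, and substitute into \eqref{equ:bilinear-bT}. Your explicit remark that the commutativity $d^k\Pi_T^k=\Pi_T^{k+1}d^k$ must hold for the smooth but non-polynomial function $\exp(\theta\cdot x)w_h$ is a point the paper passes over silently, and is a worthwhile clarification.
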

\begin{proof}
In light of \eqref{equ:exp-flux2} and commutativity property, we have 
$$ 
\begin{aligned}
J^k_{\theta,T} w_h &= H_{\theta,T}^{k+1} d^k \Pi_T^k
E_{\theta}w_h = H_{\theta,T}^{k+1} \Pi_T^{k+1} d^k E_{\theta}w_h
\\
&= H_{\theta,T}^{k+1} \sum_{S\in \mathcal{S}_T^{k+1}}
l_S(d^kE_{\theta}w_h) \varphi_S \\
&= \sum_{S\in \mathcal{S}_T^{k+1}} \left( \dashint_S
\exp(\theta \cdot x) \right)^{-1}
\delta_S(E_{\theta}w_h)\varphi_S.
\end{aligned}
$$ 
Then, \eqref{equ:bT} follows from the definition of $\tilde{a}_T$ in
\eqref{equ:bilinear-bT}.
\end{proof}

Note that, \commentone{due to the term $(\alpha \varphi_S, d^kv_h)$
in \eqref{equ:bilinear-bT}}, the local bilinear form \eqref{equ:bT}
requires the local mass matrix of $\mathcal{P}_1^-\Lambda^{k+1}$.  In what follows,
we introduce the local bilinear form of SAFE which is more friendly to
the implementation. \commentone{More precisely, the local bilinear
form of SAFE only requires a modification of the local stiffness
matrix of $\mathcal{P}_1^-\Lambda^k$, see Appendix
\ref{subsec:Bernoulli} for the implementation issues}. The primary
step is to construct a local constant interpolation so that the
resulting bilinear form mimics the graph Laplacian. 

Let $\bar{\alpha}$ be the local $L^2$ projection of $\alpha$ on the
piecewise constant space.  Let $\bar{\theta}$ be a piecewise constant
approximation of $\theta$ such that  
\begin{equation} \label{equ:bar-theta}
\|\theta - \bar{\theta}\|_{0,\infty,T} \lesssim
h_T|\theta|_{1,\infty,T}.
\end{equation}
We also define the {\it harmonic average} on a sub-simplex $S \subset
\bar{T}$ as  
\begin{equation} \label{equ:harmonic-avg}
\mathcal{H}_S(\bar{\alpha}, \bar{\theta}) = \left( \dashint_S
\exp(\bar{\theta}\cdot x){\bar \alpha}^{-1} \right)^{-1}. 
\end{equation} 

\subsection{Local bilinear form of SAFE on
$\mathcal{P}_1^-\Lambda^0(T)$}
To make our point, we start from the well-known $H^1$ graph Laplacian 
\begin{equation} \label{equ:H1-graph}
(\grad w_h, \grad v_h)_T = \sum_{E\in \mathcal{S}_T^1} \omega_{E}^T
\delta_E(w_h)\delta_E(v_h) \qquad \forall w_h, v_h \in
\mathcal{P}_1^-\Lambda^0(T), 
\end{equation}
where
$\omega_{E}^T = -(\grad \varphi_{a_i}, \grad \varphi_{a_j})_T, 
E = \overrightarrow{a_i a_j}$, and $\tau_E = \frac{ \overrightarrow{a_i a_j}}{ |\overrightarrow{a_i
  a_j}| }$.
We have the following lemma. 
\begin{lemma} \label{lem:H1-identity}
The following identity holds
\begin{equation} \label{equ:H1-identity}
I = \sum_{E\in \mathcal{S}_T^1}
\omega_{E}^T \frac{|E|^2}{|T|} \tau_E \tau_E^T.
\end{equation}
\end{lemma}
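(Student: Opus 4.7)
The plan is to test the $H^1$ graph Laplacian identity \eqref{equ:H1-graph} on the linear polynomials $p_c(x) = c \cdot x$ for an arbitrary constant vector $c \in \mathbb{R}^3$, and then exploit the arbitrariness of $c$ to extract a matrix identity.

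First I would compute both sides of \eqref{equ:H1-graph} when $w_h = v_h = p_c$. On the left-hand side, $\grad p_c = c$, so $(\grad p_c, \grad p_c)_T = |T|\, c^T c$. On the right-hand side, for each edge $E = \overrightarrow{a_i a_j}$ we have $\delta_E(p_c) = p_c(a_j) - p_c(a_i) = c \cdot (a_j - a_i) = |E|\, c \cdot \tau_E$, hence $(\delta_E p_c)^2 = |E|^2\, c^T \tau_E \tau_E^T c$. Substituting into \eqref{equ:H1-graph} and dividing through by $|T|$ yields
\begin{equation*}
c^T c = c^T \left( \sum_{E \in \mathcal{S}_T^1} \omega_E^T \frac{|E|^2}{|T|} \tau_E \tau_E^T \right) c \qquad \forall c \in \mathbb{R}^3.
\end{equation*}

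Since the matrix on the right is symmetric (each rank-one summand $\tau_E \tau_E^T$ is symmetric) and agrees with the identity as a quadratic form on all of $\mathbb{R}^3$, the polarization identity (or simply testing with $c = e_i$ and $c = e_i + e_j$) forces equality of the matrices, which is \eqref{equ:H1-identity}.

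I do not anticipate any real obstacle here: the proof is essentially an exact computation, with the only mild subtlety being the passage from the scalar identity (valid for every $c$) to the matrix identity, which is handled by symmetry plus polarization. One may alternatively derive the same statement directly from the relation $\grad \varphi_{a_i} = -\sum_{j \neq i} \omega_{\overrightarrow{a_j a_i}}^T\, (a_j - a_i) / \text{(something)}$ coming from the identity $\sum_i \varphi_{a_i} \equiv 1$, but routing through the Poisson energy keeps the argument transparent and makes the role of the graph weights $\omega_E^T$ explicit.
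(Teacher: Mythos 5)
Your proof is correct and follows essentially the same route as the paper, which tests \eqref{equ:H1-graph} with $w_h = \xi\cdot x$ and $v_h = \eta\cdot x$ for arbitrary $\xi,\eta\in\mathbb{R}^3$; the only cosmetic difference is that you take $\xi=\eta=c$ and recover the matrix identity by symmetry and polarization rather than reading it off directly from the bilinear form. Both computations of $\delta_E(p_c)=|E|\,c\cdot\tau_E$ and of the left-hand side are exactly as in the paper's intended argument.
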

\begin{proof}
\commentone{The proof follows by taking $w_h = \xi \cdot x$
and $v_h = \eta \cdot x$ in \eqref{equ:H1-graph} for arbitrary
$\xi,\eta \in \mathbb{R}^3$.}
\end{proof}
\begin{definition}
$\bar{\Pi}_T^1: \mathcal{P}_1^-\Lambda^1(T) \mapsto \mathbb{R}^3$ is
defined by 
\begin{equation} \label{equ:barPi-k1}
\bar{\Pi}_T^1 w_h := \sum_{E\in \mathcal{S}_T^1} \omega_E^T
\frac{|E|}{|T|} l_E(w_h) \tau_E \quad \forall w_h\in \mathcal{P}_1^-\Lambda^1(T).
\end{equation}
\end{definition}

\begin{lemma}
If $w_h$ is a constant vector on $T$, then $\bar{\Pi}_T^1 w_h = w_h$. 
\end{lemma}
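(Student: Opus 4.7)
The plan is to reduce the claim directly to the identity in Lemma \ref{lem:H1-identity}. Assume $w_h \equiv c$ for some constant vector $c \in \mathbb{R}^3$. Since the edge degree of freedom is the tangential line integral, I would first compute
$$
l_E(w_h) = \int_E w_h \cdot \tau_E \, ds = |E|\,(c \cdot \tau_E).
$$
Substituting this into the definition \eqref{equ:barPi-k1} gives
$$
\bar{\Pi}_T^1 w_h = \sum_{E \in \mathcal{S}_T^1} \omega_E^T \frac{|E|^2}{|T|} (c \cdot \tau_E)\, \tau_E.
$$

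The next step is to recognize the scalar $c \cdot \tau_E$ as $\tau_E^T c$ and pull the constant vector $c$ out of the sum, writing
$$
\bar{\Pi}_T^1 w_h = \left( \sum_{E \in \mathcal{S}_T^1} \omega_E^T \frac{|E|^2}{|T|} \tau_E \tau_E^T \right) c.
$$
By Lemma \ref{lem:H1-identity} the matrix inside the parentheses is precisely the identity $I$, so $\bar{\Pi}_T^1 w_h = c = w_h$, which is exactly the statement.

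There is no real obstacle here: the lemma is essentially a one-line reformulation of Lemma \ref{lem:H1-identity} once the edge functionals are evaluated on a constant field. The only thing to verify carefully is the sign/orientation convention (i.e.\ that $\tau_E$ used in the definition of $l_E$ and in the identity \eqref{equ:H1-identity} is the same), but both are fixed consistently as $\tau_E = \overrightarrow{a_i a_j}/|\overrightarrow{a_i a_j}|$ earlier in the section, so the rank-one tensors $\tau_E \tau_E^T$ are orientation-independent and everything matches.
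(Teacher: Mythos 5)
Your proof is correct and is essentially identical to the paper's: evaluate $l_E(w_h)=|E|\,(w_h\cdot\tau_E)$ for constant $w_h$, rewrite the sum as the matrix $\sum_E \omega_E^T \frac{|E|^2}{|T|}\tau_E\tau_E^T$ acting on $w_h$, and invoke Lemma \ref{lem:H1-identity}. Nothing further is needed.
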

\begin{proof}
If $w_h$ is constant, then $l_E(w_h) = |E|w_h \cdot \tau_E$. Thus, 
$$ 
\bar{\Pi}_T^1 w_h = \sum_{E\in \mathcal{S}_T^1} \omega_E^T
\frac{|E|}{|T|}|E|(w_h \cdot \tau_E) \tau_E = \left( 
\sum_{E\in \mathcal{S}_T^1} \omega_E^T \frac{|E|^2}{|T|}\tau_E\tau_E^T
\right) w_h = w_h.
$$ 
This completes the proof.
\end{proof}

We are now in the position to present the local bilinear form
for the $H^1$ convection-diffusion as 
\begin{equation} \label{equ:aT-k0}
a_T(w_h, v_h) := (\alpha \bar{\Pi}_T^1
J_{\bar{\theta},T}^0w_h, \grad v_h)_T \quad \forall w_h, v_h \in \mathcal{P}_1^-\Lambda^0(T).
\end{equation}

\begin{theorem}
It holds that, for any $w_h, v_h \in \mathcal{P}_1^-\Lambda^0(T)$, 
\begin{equation} \label{equ:aT2-k0}
a_T(w_h, v_h) = \sum_{E\in \mathcal{S}_T^1}
\omega_E^T \mathcal{H}_E(\bar{\alpha}, \bar{\theta})
\delta_E(\exp(\bar{\theta} \cdot x)w_h)\delta_E(v_h).
\end{equation}
\end{theorem}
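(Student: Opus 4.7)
The plan is to unravel both $\bar{\Pi}_T^1$ and $J_{\bar{\theta},T}^0$ using formulas already established, then exploit the fact that the resulting expression lies in the constant sector of $T$ so that the $\alpha$ in the inner product can be replaced by its average $\bar{\alpha}$.

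First, I would invoke the explicit edge-by-edge expansion of $J_{\bar{\theta},T}^0$ derived in the proof of Theorem \ref{thm:bT2} (with $\theta$ replaced by the constant $\bar{\theta}$, for which \eqref{equ:exp-flux2} is valid), giving
\[
J_{\bar{\theta},T}^0 w_h = \sum_{E\in \mathcal{S}_T^1} \bigl(\textstyle\dashint_E \exp(\bar{\theta}\cdot x)\bigr)^{-1}\delta_E\bigl(\exp(\bar{\theta}\cdot x) w_h\bigr)\,\varphi_E.
\]
Reading off $l_E(J_{\bar{\theta},T}^0 w_h)$ and plugging into the definition \eqref{equ:barPi-k1} of $\bar{\Pi}_T^1$, I obtain the constant vector
\[
\bar{\Pi}_T^1 J_{\bar{\theta},T}^0 w_h = \sum_{E\in \mathcal{S}_T^1} \omega_E^T \frac{|E|}{|T|} \bigl(\textstyle\dashint_E \exp(\bar{\theta}\cdot x)\bigr)^{-1} \delta_E\bigl(\exp(\bar{\theta}\cdot x) w_h\bigr)\,\tau_E.
\]

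Next, because this vector and $\grad v_h$ are both constant on $T$, the inner product in \eqref{equ:aT-k0} factors:
\[
a_T(w_h, v_h) = \bigl(\bar{\Pi}_T^1 J_{\bar{\theta},T}^0 w_h\bigr)\cdot\grad v_h \,\int_T \alpha\,dx = |T|\,\bar{\alpha}\,\bigl(\bar{\Pi}_T^1 J_{\bar{\theta},T}^0 w_h\bigr)\cdot\grad v_h,
\]
where the last equality uses the definition of $\bar{\alpha}$ as the $L^2$-projection of $\alpha$ onto constants on $T$. I would then combine the constant $\bar{\alpha}$ (which may be pulled inside the average over $E$, since it is independent of $x$) with $(\dashint_E \exp(\bar{\theta}\cdot x))^{-1}$ to recognize precisely $\mathcal{H}_E(\bar{\alpha},\bar{\theta})$ as defined in \eqref{equ:harmonic-avg}.

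Finally, I would use the elementary identity $\delta_E(v_h) = \int_E \grad v_h\cdot\tau_E = |E|\,\tau_E\cdot\grad v_h$, valid because $\grad v_h$ is constant on $T$, to rewrite each $|E|\,\tau_E\cdot\grad v_h$ as $\delta_E(v_h)$. Collecting everything then yields \eqref{equ:aT2-k0}. I do not expect a serious obstacle; the only point that requires attention is ensuring that $\bar{\alpha}$ may be pulled out of the integral and combined into $\mathcal{H}_E(\bar{\alpha},\bar{\theta})$ as a constant, which relies crucially on the fact that $\grad v_h$ is constant on $T$ so that the full constancy of $\bar{\Pi}_T^1 J_{\bar{\theta},T}^0 w_h \cdot \grad v_h$ is available. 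Note that Lemma \ref{lem:H1-identity} is not used directly here; it guarantees the consistency of $\bar{\Pi}_T^1$ on constants and motivates the definition, but the algebraic computation goes through from \eqref{equ:barPi-k1} alone.
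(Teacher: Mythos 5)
Your proposal is correct and follows essentially the same route as the paper: expand $J_{\bar{\theta},T}^0 w_h$ edge-by-edge via Theorem \ref{thm:bT2}, push it through $\bar{\Pi}_T^1$ from \eqref{equ:barPi-k1}, and then evaluate $(\alpha\,\tfrac{|E|}{|T|}\tau_E,\grad v_h)_T$ using the constancy of $\tau_E$ and $\grad v_h$ to produce $\bar{\alpha}\,\delta_E(v_h)$ and hence $\mathcal{H}_E(\bar{\alpha},\bar{\theta})$. The only difference is cosmetic (you factor the sum before taking the inner product), and your added remarks about why $\bar{\alpha}$ can be absorbed into the harmonic average are consistent with the paper's computation.
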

\begin{proof}
From \eqref{equ:barPi-k1} and Theorem \ref{thm:bT2}, we have  
$$
\begin{aligned}
\bar{\Pi}_T^1 J_{\bar{\theta},T}^0w_h &=
\bar{\Pi}_T^1 \sum_{E\in \mathcal{S}_T^{1}} \left( \dashint_E
\exp(\bar{\theta} \cdot x) \right)^{-1}
\delta_E(E_{\bar{\theta}}w_h)\varphi_E \\
&= \sum_{E\in \mathcal{S}_T^{1}} \left( \dashint_E \exp(\bar{\theta}
\cdot x) \right)^{-1} \delta_E(E_{\bar{\theta}}w_h) \omega_E^T
\frac{|E|}{|T|}\tau_E.
\end{aligned}
$$ 
Therefore, 
$$ 
\begin{aligned}
a_T(w_h, v_h) &= \sum_{E\in \mathcal{S}_T^{1}} \left( \dashint_E
\exp(\bar{\theta} \cdot x) \right)^{-1}
\delta_E(E_{\bar{\theta}}w_h) \omega_E^T
(\alpha \frac{|E|}{|T|}\tau_E, \grad v_h)_T \\
&= \sum_{E\in \mathcal{S}_T^1} \omega_E^T
\mathcal{H}_E(\bar{\alpha}, \bar{\theta})
\delta_E(\exp(\bar{\theta} \cdot x)w_h)\delta_E(v_h).
\end{aligned}
$$ 
This completes the proof.
\end{proof}
\commentone{
We note that when $\theta$ is a local constant, the local bilinear form \eqref{equ:aT2-k0} for the $H(\grad)$ convection-diffusion problem 
coincides with the EAFE scheme (cf.\cite[Equ. (3.12)]{xu1999monotone}). The SAFE scheme for $H(\curl)$ and $H(\rm div)$ convection-diffusion problems below can be viewed as an extension of the EAFE scheme. 
}

\subsection{Local bilinear form of SAFE on
$\mathcal{P}_1^-\Lambda^1(T)$} By analogy an $H(\rm curl)$ graph
Laplacian is needed to construct the local constant projection
on $\mathcal{P}_1^-\Lambda^1(T)$. 

\begin{lemma} \label{lem:curl-graph}
For any $w_h, v_h \in \mathcal{P}_1^-\Lambda^{1}(T)$, it holds that 
\begin{equation} \label{equ:curl-graph}
(\curl w_h, \curl v_h)_T = \sum_{F,F'\in \mathcal{S}_T^2, F\neq F'}
\omega_{FF'}^T \delta_F(w_h)\delta_{F'}(v_h),
\end{equation}
where
$\omega_{FF'}^T = \omega_{F'F}^T = -\frac{1}{2} \|\curl \varphi_{\bar{F}\cap
\bar{F'}}\|_T^2$.
\end{lemma}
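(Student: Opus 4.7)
The plan is to recast the left-hand side as a quadratic form in the face degrees of freedom $\{\delta_F(w_h)\}$ via the $L^2$ mass matrix of $\mathcal{P}_1^-\Lambda^2(T)$, and then to annihilate the diagonal contribution by exploiting the constraint $\sum_F \delta_F(w_h) = 0$ that stems from $\div\circ\curl = 0$. The resulting weights are identified with $\omega_{FF'}^T$ via a simple norm equality.

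First, since $\Pi_T^1 w_h = w_h$, the commutativity $d^1 \Pi_T^1 = \Pi_T^2 d^1$ yields the expansion $\curl w_h = \sum_{F \in \mathcal{S}_T^2} \delta_F(w_h) \varphi_F$. Substituting into the inner product gives
\begin{equation*}
(\curl w_h, \curl v_h)_T = \sum_{F, F'} M_{FF'}\, \delta_F(w_h)\, \delta_{F'}(v_h), \qquad M_{FF'} := (\varphi_F, \varphi_{F'})_T.
\end{equation*}
Taking (without loss of generality) the face normals to be outward, the divergence theorem and $\div\curl = 0$ yield $\sum_F \delta_F(w_h) = 0$ for every $w_h \in \mathcal{P}_1^-\Lambda^1(T)$. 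Consequently, $\sum_{F,F'} (r_F + s_{F'})\, \delta_F(w_h)\, \delta_{F'}(v_h) = 0$ for arbitrary constants $\{r_F\}$, $\{s_{F'}\}$, and the choice $r_F = s_F = -\frac{1}{2} M_{FF}$ eliminates the diagonal and produces
\begin{equation*}
(\curl w_h, \curl v_h)_T = -\frac{1}{2} \sum_{F \neq F'} \|\varphi_F - \varphi_{F'}\|_T^2\, \delta_F(w_h)\, \delta_{F'}(v_h).
\end{equation*}

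It remains to show $\|\varphi_F - \varphi_{F'}\|_T^2 = \|\curl \varphi_S\|_T^2$ for $S = \bar F \cap \bar F'$. Applying the expansion from the first step to $w_h = \varphi_S$, only the two faces $F, F'$ containing the edge $S$ contribute to $\curl \varphi_S$; by Stokes' theorem, $\delta_G(\varphi_S) = \int_{\partial G} \varphi_S \cdot \tau = \pm 1$ with opposite signs for $G = F$ and $G = F'$, because the coherent orientation of $\partial T$ via outward normals forces $S$ to inherit opposite orientations from $\partial F$ and $\partial F'$. Hence $\curl \varphi_S = \pm(\varphi_F - \varphi_{F'})$, the norm equality follows, and we conclude $\omega_{FF'}^T = M_{FF'} - \frac{1}{2}(M_{FF} + M_{F'F'})$, matching the claimed formula. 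The main technical point is precisely this orientation analysis---establishing the sign relation $\delta_F(\varphi_S) = -\delta_{F'}(\varphi_S)$ that produces the $\varphi_F - \varphi_{F'}$ combination and hence the cancellation revealing $\omega_{FF'}^T$.
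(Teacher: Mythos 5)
Your proof is correct, but it takes a genuinely different route from the paper's. The paper verifies \eqref{equ:curl-graph} by brute force on the N\'ed\'elec basis $\varphi_{ij}=\lambda_i\nabla\lambda_j-\lambda_j\nabla\lambda_i$, splitting into three cases (same edge, edges sharing a vertex, disjoint edges) and matching both sides through explicit cross-product identities such as $\|\curl\varphi_{ij}\|_T^2=4\|\nabla\lambda_i\times\nabla\lambda_j\|_T^2$. You instead obtain the identity in one stroke: expand $\curl w_h=\sum_F\delta_F(w_h)\varphi_F$ (valid since $\curl w_h$ is constant, hence reproduced by $\Pi_T^2$), reduce the left side to the Raviart--Thomas Gram matrix $M_{FF'}=(\varphi_F,\varphi_{F'})_T$, and then use the constraint $\sum_F\delta_F(w_h)=0$ (from $\div\circ\curl=0$ with outward normals) to subtract the rank-one terms $r_F+s_{F'}$ with $r_F=s_F=-\tfrac12 M_{FF}$, turning the Gram form into the graph-Laplacian form with off-diagonal weights $-\tfrac12\|\varphi_F-\varphi_{F'}\|_T^2$; the identification with $-\tfrac12\|\curl\varphi_{\bar F\cap\bar F'}\|_T^2$ then follows from $\curl\varphi_S=\pm(\varphi_F-\varphi_{F'})$, which rests on the correctly identified orientation fact $\delta_F(\varphi_S)=-\delta_{F'}(\varphi_S)=\pm1$ (each edge inherits opposite orientations from the two faces of a coherently oriented $\partial T$). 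Both proofs implicitly fix the same outward-normal convention for the face degrees of freedom (the paper states this convention in Appendix~\ref{subsec:Bernoulli}), so your ``without loss of generality'' is consistent with the statement. What your approach buys is structural insight---it explains \emph{why} the weights are squared norms of differences (any Gram quadratic form restricted to the mean-zero subspace is a graph Laplacian) and it generalizes beyond the lowest-order case with no new computation; what the paper's case analysis buys is that it is entirely elementary and produces the explicit values $\omega_{FF'}^T$ in terms of $\nabla\lambda_i\times\nabla\lambda_j$ along the way, which are reused in the proof of Lemma~\ref{lem:curl-identity}.
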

\begin{proof}
It suffices to show \eqref{equ:curl-graph} on the N\'ed\'elec basis
functions $\varphi_E = \varphi_{ij} := \lambda_i\nabla \lambda_j -
\lambda_j \nabla\lambda_i$ where $E = \overrightarrow{a_ia_j}$. That
is, $w_h = \varphi_E, v_h = \varphi_{E'}$. We consider the following
three cases (see Figure \ref{fig:3D-simplex}): 
\begin{figure}[!htbp]
  \centering
  \includegraphics[width=0.35\textwidth]{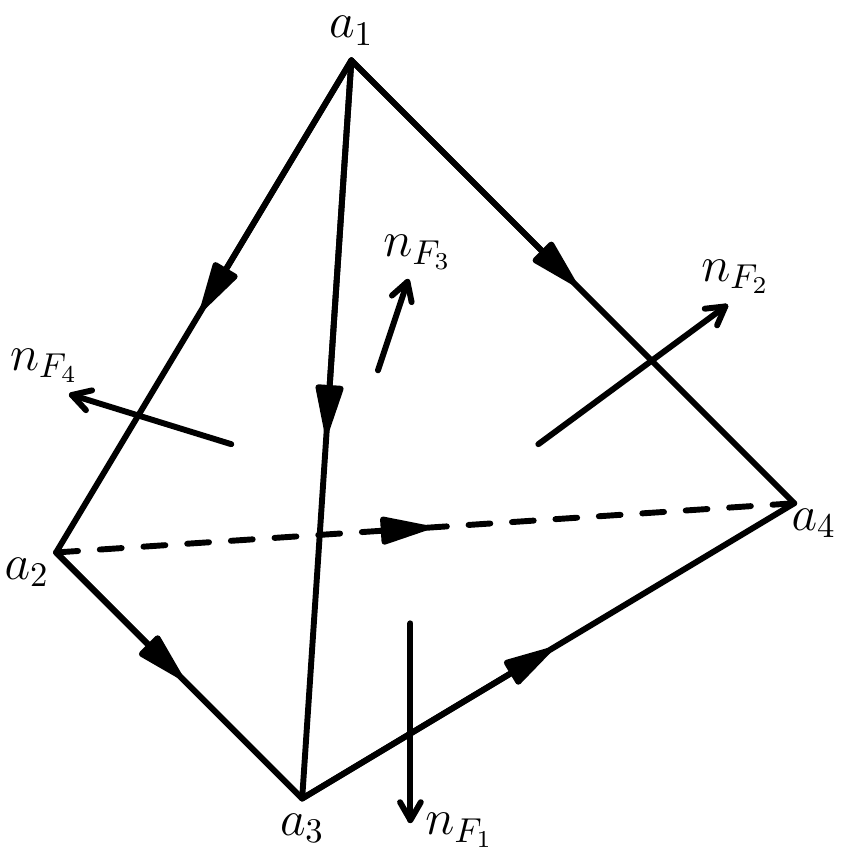}
  \caption{3D Tetrahedron}
  \label{fig:3D-simplex}
\end{figure}
\begin{enumerate}
\item $E$ and $E'$ are same: $w_h = v_h = \varphi_{E}$. Without loss
of generality, we prove the case $E = \overrightarrow{a_1a_2}$, which
follows from 
$$ 
\text{R.H.S. of \eqref{equ:curl-graph}} = 2\omega_{F_3F_4}
\delta_{F_3}(\varphi_{12})\delta_{F_4}(\varphi_{12}) =
-2\omega_{F_3F_4} = \|\curl \varphi_{12}\|_T^2.
$$ 
Here, we use the following formula (cf. \cite[Section
2.1.1]{boffi2013mixed})
$$ 
\delta_F(w_h) = \int_F \curl w_h \cdot n_F = \int_F \div_F(w_h \times
n_F).
$$ 
\item $\bar{E}$ and $\bar{E'}$ share a common vertex. Without loss of
generality, we consider the case in which $E =
\overrightarrow{a_1a_2}, E' = \overrightarrow{a_1a_3}$. Then, 
$$ 
\begin{aligned}
\text{R.H.S. of \eqref{equ:curl-graph}} &= \omega_{F_4F_2}
\delta_{F_4}(\varphi_{12})\delta_{F_2}(\varphi_{13}) \\
&~~~ + \omega_{F_3F_4}\delta_{F_3}(\varphi_{12})
\delta_{F_4}(\varphi_{13}) 
+ \omega_{F_3F_2}\delta_{F_3}(\varphi_{12})\delta_{F_3}(\varphi_{13})
  \\
&= \omega_{F_4F_2} + \omega_{F_3F_4} - \omega_{F_3F_2} \\ 
&= \frac{-\|\curl \varphi_{13}\|_T^2 - \|\curl \varphi_{12}\|_T^2 +
  \|\curl \varphi_{14}\|_T^2}{2} \\
&= -2\|\nabla\lambda_1\times \nabla\lambda_3\|_T^2 
-2\|\nabla\lambda_1\times \nabla\lambda_2\|_T^2 
+2\|\nabla\lambda_1\times \nabla\lambda_4\|_T^2 \\
&= 4(\nabla\lambda_1\times\nabla\lambda_2,
\nabla\lambda_1\times\nabla\lambda_3)_T = (\curl\varphi_{12},
  \curl\varphi_{13})_T.
\end{aligned}
$$ 
\item $\bar{E}\cap \bar{E'} = \varnothing$. Without loss of generality,
we consider the case in which $E = \overrightarrow{a_1a_2}, E' =
\overrightarrow{a_3a_4}$. Then, 
$$ 
\begin{aligned}
\text{R.H.S. of \eqref{equ:curl-graph}} &= \omega_{F_3F_1}
\delta_{F_3}(\varphi_{12})\delta_{F_1}(\varphi_{34}) 
+ \omega_{F_3F_2} \delta_{F_3}(\varphi_{12})\delta_{F_2}(\varphi_{34})\\
&~~~ + \omega_{F_4F_1}\delta_{F_4}(\varphi_{12})
\delta_{F_1}(\varphi_{34}) 
+ \omega_{F_4F_2}\delta_{F_4}(\varphi_{12}) 
\delta_{F_2}(\varphi_{34}) \\
&= \omega_{F_3F_1} - \omega_{F_3F_2} - \omega_{F_4F_1} + \omega_{F_4F_2} \\ 
&= 2\|\nabla\lambda_2\times \nabla\lambda_3\|_T^2 +
2\|\nabla\lambda_1\times \nabla \lambda_4\|_T^2 \\ 
& ~~~ -2\|\nabla\lambda_1\times \nabla\lambda_3\|_T^2 
-2\|\nabla\lambda_2\times \nabla\lambda_4\|_T^2 \\
&= 4(\nabla\lambda_1\times\nabla\lambda_2,
\nabla\lambda_3\times\nabla\lambda_4)_T = (\curl\varphi_{12},
\curl\varphi_{34})_T.
\end{aligned}
$$ 
\end{enumerate}
This completes the proof.
\end{proof}

\begin{lemma} \label{lem:curl-identity}
The following identity holds
\begin{equation} \label{equ:curl-identity}
I = \sum_{F,F'\in \mathcal{S}_T^2, F\neq F'} \omega_{FF'}^T
\frac{|F||F'|}{|T|} n_Fn_{F'}^T = 
\sum_{F,F'\in \mathcal{S}_T^2, F\neq F'} \omega_{FF'}^T
\frac{|F||F'|}{|T|} n_{F'}n_{F}^T.
\end{equation}
\end{lemma}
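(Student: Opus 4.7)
The plan is to mimic the strategy used in Lemma \ref{lem:H1-identity}: substitute a two-parameter family of specially chosen test functions into the $H(\curl)$ graph Laplacian identity \eqref{equ:curl-graph} and read off the matrix identity \eqref{equ:curl-identity} by the arbitrariness of the parameters. The analogue of the ``linear functionals $w_h=\xi\cdot x$, $v_h=\eta\cdot x$'' from the $H^1$ case is a family of Whitney $1$-forms whose curl realizes an arbitrary constant vector in $\mathbb{R}^3$.

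Specifically, for any constant vectors $\xi,\eta\in\mathbb{R}^3$, I would set $w_h:=\tfrac{1}{2}\,\xi\times x$ and $v_h:=\tfrac{1}{2}\,\eta\times x$. These lie in $\mathcal{P}_1^-\Lambda^1(T)$ because the lowest-order N\'ed\'elec space consists of fields of the form $a+b\times x$ with $a,b$ constant. A direct computation (using $\nabla\cdot x=3$ and $(\xi\cdot\nabla)x=\xi$) shows that $\curl w_h=\xi$ and $\curl v_h=\eta$. Plugging this choice into \eqref{equ:curl-graph} gives, on the left-hand side,
\begin{equation*}
(\curl w_h,\curl v_h)_T=|T|\,\xi^{T}\eta,
\end{equation*}
while on the right-hand side, using $\delta_F(w_h)=\int_F\curl w_h\cdot n_F=|F|\,n_F^{T}\xi$ and similarly for $\delta_{F'}(v_h)$, we obtain
\begin{equation*}
\sum_{F,F'\in\mathcal{S}_T^2,\,F\neq F'}\omega_{FF'}^{T}|F||F'|\,(n_F^{T}\xi)(n_{F'}^{T}\eta)
=\xi^{T}\!\left(\sum_{F\neq F'}\omega_{FF'}^{T}\,|F||F'|\,n_F n_{F'}^{T}\right)\!\eta.
\end{equation*}
Dividing by $|T|$ and letting $\xi,\eta$ range over $\mathbb{R}^3$ yields the first equality of \eqref{equ:curl-identity}. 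The second equality follows either by taking the transpose of both sides (since $I^{T}=I$) or by a direct index swap combined with the symmetry $\omega_{FF'}^{T}=\omega_{F'F}^{T}$ stated in Lemma \ref{lem:curl-graph}.

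The only nontrivial point — the ``main obstacle'' — is checking that $\tfrac{1}{2}\,\xi\times x$ genuinely belongs to $\mathcal{P}_1^-\Lambda^1(T)$ so that Lemma \ref{lem:curl-graph} applies; this is immediate from the description of the first-kind N\'ed\'elec space but should be pointed out explicitly, since otherwise the step that converts the differential-form identity into a matrix identity on $\mathbb{R}^3$ is not legitimate. Everything else is a bookkeeping computation analogous to the one in Lemma \ref{lem:H1-identity}.
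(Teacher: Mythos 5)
Your proof is correct and follows exactly the paper's own argument: the paper likewise proves the identity by substituting $w_h=\tfrac12\xi\times x$ and $v_h=\tfrac12\eta\times x$ into \eqref{equ:curl-graph} for arbitrary $\xi,\eta\in\mathbb{R}^3$. Your writeup just fills in the details (membership in the N\'ed\'elec space, $\curl(\tfrac12\xi\times x)=\xi$, and the evaluation of $\delta_F$) that the paper leaves implicit.
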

\begin{proof}
\commentone{The proof follows by taking $w_h =
\frac{1}{2}\xi\times x$ and $v_h = \frac{1}{2}\eta\times x$ in
  \eqref{equ:curl-graph} for arbitrary $\xi, \eta \in \mathbb{R}^3$.}
\end{proof}
\begin{definition}
$\bar{\Pi}_T^2: \mathcal{P}_1^-\Lambda^2(T) \mapsto \mathbb{R}^3$ is
defined by 
\begin{equation} \label{equ:barPi-k2}
\bar{\Pi}_T^2 w_h := \sum_{F,F'\in \mathcal{S}_T^2, F\neq F'}
\omega_{FF'}^T \frac{|F'|}{|T|}n_{F'} l_{F}(w_h) \qquad
\forall w_h \in \mathcal{P}_1^-\Lambda^2(T).
\end{equation}
\end{definition}

\begin{lemma}
If $w_h$ is a constant vector on $T$, then $\bar{\Pi}_T^2 w_h = w_h$. 
\end{lemma}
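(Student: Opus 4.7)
The plan is to mirror the argument used for the $H^1$ case in the proof that $\bar{\Pi}_T^1$ reproduces constants, but now using the $H(\curl)$ identity from Lemma~\ref{lem:curl-identity} in place of Lemma~\ref{lem:H1-identity}.

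First I would observe that if $w_h$ is a constant vector on $T$, then the degree of freedom $l_F(w_h)$ reduces to a scalar multiple of $w_h \cdot n_F$: specifically, $l_F(w_h) = \int_F w_h \cdot n_F = |F| \, w_h \cdot n_F$, since $w_h$ and $n_F$ are constant on $F$. Substituting this into the definition \eqref{equ:barPi-k2} yields
\begin{equation*}
\bar{\Pi}_T^2 w_h = \sum_{F,F'\in \mathcal{S}_T^2,\, F\neq F'} \omega_{FF'}^T \frac{|F||F'|}{|T|} \, n_{F'} (n_F^T w_h) = \left( \sum_{F,F'\in \mathcal{S}_T^2,\, F\neq F'} \omega_{FF'}^T \frac{|F||F'|}{|T|} \, n_{F'} n_F^T \right) w_h.
\end{equation*}

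Then I would invoke the identity \eqref{equ:curl-identity} from Lemma~\ref{lem:curl-identity} (specifically the second form, which has $n_{F'} n_F^T$ in the order that appears above) to conclude that the matrix in parentheses is the identity, so $\bar{\Pi}_T^2 w_h = w_h$. No obstacle is expected: the entire argument is a direct bookkeeping computation once the constant-vector formula for $l_F(w_h)$ is in hand, and all heavy lifting has been absorbed into Lemma~\ref{lem:curl-identity}.
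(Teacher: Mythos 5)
Your proposal is correct and follows exactly the paper's argument: substitute $l_F(w_h)=|F|\,w_h\cdot n_F$ into the definition of $\bar{\Pi}_T^2$ and apply the second form of the identity in Lemma~\ref{lem:curl-identity}. Nothing is missing.
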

\begin{proof}
If $w_h$ is constant, then $l_{F}(w_h) = |F|w_h \cdot n_{F}$. Thus, 
$$ 
\begin{aligned}
\bar{\Pi}_T^2 w_h &= \sum_{F,F'\in \mathcal{S}_T^2, F\neq F'}
\omega_{FF'}^T \frac{|F'|}{|T|}|F|(w_h \cdot n_F) n_F' \\ 
&= \left( \sum_{F,F'\in \mathcal{S}_T^2, F\neq F'} \omega_{FF'}^T
\frac{|F||F'|}{|T|}n_{F'} n_{F}^T \right) w_h = w_h.
\end{aligned}
$$ 
This completes the proof.
\end{proof}

By analogy the local SAFE bilinear form for the
$H(\curl)$ convection-diffusion is given as 
\begin{equation} \label{equ:aT-k1} 
a_T(w_h, v_h) := (\alpha\bar{\Pi}_T^2 J_{\bar{\theta},T}^1w_h,
\curl v_h)_T \qquad \forall w_h, v_h \in \mathcal{P}_1^-\Lambda^1(T).
\end{equation} 

\begin{theorem}
It holds that 
\begin{equation} \label{equ:aT2-k1}
a_T(w_h, v_h) = \sum_{F,F'\in \mathcal{S}_T^2, F\neq F'}
\omega_{FF'}^T \mathcal{H}_F(\bar{\alpha}, \bar{\theta})
\delta_F(\exp(\bar{\theta}\cdot x)w_h)\delta_{F'}(v_h).
\end{equation}
\end{theorem}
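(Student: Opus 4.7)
The plan is to mimic the argument used for the $\mathcal{P}_1^-\Lambda^0(T)$ case and exploit the analogous identity \eqref{equ:curl-identity} that powers the local projection $\bar\Pi_T^2$. First, I would use the intermediate expression derived inside the proof of Theorem \ref{thm:bT2}, namely
\[
J^1_{\bar\theta,T}w_h \;=\; \sum_{F\in \mathcal{S}_T^{2}} \Bigl(\dashint_F \exp(\bar\theta\cdot x)\Bigr)^{-1} \delta_F(E_{\bar\theta}w_h)\,\varphi_F,
\]
together with the definition \eqref{equ:barPi-k2} of $\bar\Pi_T^2$, noting that $l_{F}(\varphi_{F''})=\delta_{FF''}$. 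Applying $\bar\Pi_T^2$ term by term yields
\[
\bar\Pi_T^2 J^1_{\bar\theta,T}w_h \;=\; \sum_{F\in \mathcal{S}_T^{2}} \sum_{F'\neq F} \omega_{FF'}^T \frac{|F'|}{|T|}\, n_{F'} \Bigl(\dashint_F \exp(\bar\theta\cdot x)\Bigr)^{-1} \delta_F(E_{\bar\theta}w_h).
\]

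Plugging this into the definition \eqref{equ:aT-k1} produces a sum over ordered pairs $(F,F')$, and the remaining task is to convert each term $\frac{|F'|}{|T|}(\alpha n_{F'}, \curl v_h)_T$ into $\bar\alpha\,\delta_{F'}(v_h)$. Since $\curl v_h$ is constant on $T$, the vector $n_{F'}\cdot\curl v_h$ is a constant, so $\int_T \alpha\,(n_{F'}\cdot\curl v_h)\,dx = \bar\alpha\,|T|\,(n_{F'}\cdot \curl v_h)$ by definition of the local $L^2$ projection $\bar\alpha$. Using the identity $\delta_{F'}(v_h)=\int_{F'}\curl v_h\cdot n_{F'} = |F'|\,(n_{F'}\cdot\curl v_h)$, this simplifies to $\bar\alpha\,\delta_{F'}(v_h)$, exactly as in the $\grad$-case.

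Finally, I would combine the factors $\bar\alpha$ and $(\dashint_F \exp(\bar\theta\cdot x))^{-1}$ and note that, because $\bar\alpha$ is constant on $T$ and hence on any sub-simplex $F\subset \bar T$,
\[
\Bigl(\dashint_F \exp(\bar\theta\cdot x)\Bigr)^{-1}\bar\alpha \;=\; \Bigl(\dashint_F \exp(\bar\theta\cdot x)\,\bar\alpha^{-1}\Bigr)^{-1} \;=\; \mathcal{H}_F(\bar\alpha,\bar\theta),
\]
which is the definition \eqref{equ:harmonic-avg}. Rewriting $\delta_F(E_{\bar\theta}w_h) = \delta_F(\exp(\bar\theta\cdot x)w_h)$ then delivers \eqref{equ:aT2-k1}.

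None of the steps looks genuinely hard, but the main bookkeeping subtlety is the double-index structure: $\bar\Pi_T^2$ contributes a pair $(F,F')$ while $J^1_{\bar\theta,T}$ contributes a single index over $\mathcal{S}_T^2$, and one must be careful that in the final expression it is $\delta_F$ (the index from $J^1_{\bar\theta,T}$) that is weighted by $\mathcal{H}_F(\bar\alpha,\bar\theta)$, whereas $\delta_{F'}$ (the index from $\bar\Pi_T^2$/$\curl v_h$) carries no exponential weight. The symmetry between the two forms in \eqref{equ:curl-identity} guarantees that this asymmetric pairing is consistent with either choice, which is what ultimately makes the SAFE scheme only require a modification of the local stiffness matrix of $\mathcal{P}_1^-\Lambda^1$ rather than additional mass-matrix contributions from $\mathcal{P}_1^-\Lambda^2$.
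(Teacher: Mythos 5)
Your proposal is correct and follows essentially the same route as the paper's proof: expand $J^1_{\bar\theta,T}w_h$ via the explicit formula from Theorem \ref{thm:bT2}, push it through $\bar\Pi_T^2$ using $l_F(\varphi_{F''})=\delta_{FF''}$, and identify $\bigl(\alpha \tfrac{|F'|}{|T|}n_{F'},\curl v_h\bigr)_T$ with $\bar\alpha\,\delta_{F'}(v_h)$ so that the prefactor becomes $\mathcal{H}_F(\bar\alpha,\bar\theta)$. You in fact make explicit the two final identifications (constancy of $\curl v_h$ on $T$ and absorption of $\bar\alpha$ into the harmonic average) that the paper leaves implicit.
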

\begin{proof}
From \eqref{equ:barPi-k2} and Theorem \ref{thm:bT2}, we have  
$$
\begin{aligned}
\bar{\Pi}_T^2 J_{\bar{\theta},T}^1w_h &=
\bar{\Pi}_T^2 \sum_{F\in \mathcal{S}_T^2} \left( \dashint_F
\exp(\bar{\theta} \cdot x) \right)^{-1}
\delta_F(E_{\bar{\theta}}w_h)\varphi_F \\
&= \sum_{F,F'\in \mathcal{S}_T^2, F\neq F'} \left( \dashint_F
\exp(\bar{\theta}\cdot x) \right)^{-1}
\delta_F(E_{\bar{\theta}}w_h) \omega_{FF'}^T
\frac{|F'|}{|T|}n_{F'}.
\end{aligned}
$$ 
Therefore, 
$$ 
\begin{aligned}
a_T(w_h, v_h) &= \sum_{F,F'\in \mathcal{S}_T^2, F\neq F'} \left(
\dashint_F \exp(\bar{\theta}\cdot x)\right)^{-1}
\delta_F(E_{\bar{\theta}}w_h)
\omega_{FF'}^T (\alpha \frac{|F'|}{|T|}n_{F'}, \curl v_h)_T \\
&= \sum_{F,F'\in \mathcal{S}_T^2, F\neq F'}
\omega_{FF'}^T \mathcal{H}_F(\bar{\alpha}, \bar{\theta})
\delta_F(\exp(\bar{\theta} \cdot x)w_h)\delta_{F'}(v_h).
\end{aligned}
$$ 
This completes the proof.
\end{proof}

\subsection{Local bilinear form on $\mathcal{P}_1^{-}\Lambda^2(T)$}
For the $H(\div)$ convection-diffusion problem, since
$\mathcal{P}_0^-\Lambda^3(T)$ is constant, then the operator
$\bar{\Pi}_T^3$ is an identity operator. As a consequence, 
\eqref{equ:bT} can be recast into  
\begin{equation} \label{equ:aT2-k2} 
a_{T}(w_h, v_h) = \omega_T 
\mathcal{H}_{T}(\bar{\alpha}, \bar{\theta})
\delta_T\left( \exp( \bar{\theta} \cdot x) w_h \right) \delta_T(v_h),
\end{equation} 
where $\omega_T = 1/|T|$.

\subsection{Summary of local bilinear forms}
We summarize the operators defined above in \eqref{equ:diagram2}. Note
that the diagrams are commutative when $\theta$ is constant.
\begin{equation} \label{equ:diagram2}
\begin{tikzcd}
\Lambda^0(T) \arrow[r, "J_{\theta}"] 
\arrow[d, swap, "\tilde{\Pi}_{\theta,T}^0"] 
& \Lambda^1(T) \arrow[r, "J_{\theta}"] 
\arrow[d, swap, "\tilde{\Pi}_{\theta,T}^1"] 
& \Lambda^2(T) \arrow[r,"J_{\theta}"] 
\arrow[d, swap, "\tilde{\Pi}_{\theta,T}^2"] 
& \Lambda^3(T) 
\arrow[d, swap, "\tilde{\Pi}_{\theta,T}^3"] \\ 
\mathcal{P}_1^-\Lambda^0(T) \arrow[r, "J_{\theta,T}"] 
& \mathcal{P}_1^-\Lambda^1(T) \arrow[r, "J_{\theta,T}"] 
\arrow[d, swap, "\bar{\Pi}_T^1"] 
& \mathcal{P}_1^-\Lambda^2(T) \arrow[r, "J_{\theta,T}"] 
\arrow[d, swap, "\bar{\Pi}_T^2"] 
& \mathcal{P}_0\Lambda^3(T) 
\arrow[d, swap, "\bar{\Pi}_T^3"] \\
& \mathbb{R}^3 
& \mathbb{R}^3 
& \mathbb{R}
\end{tikzcd}
\end{equation}

The local SAFE bilinear forms for $H(\grad)$, $H(\curl)$, and
$H(\div)$ convection-diffusion problems can be written in a unified
fashion: 
\begin{equation} \label{equ:general-aT}
a_T(w_h, v_h) = (\alpha\bar{\Pi}_T^{k+1} J_{\bar{\theta}, T}^k w_h, d^k
v_h)_T \qquad \forall w_h, v_h \in \mathcal{P}_1^-\Lambda^k(T),
\end{equation}
where $J_{\bar{\theta}, T}^k$ is given in Definition \ref{def:J-T}.
The equivalent forms for $k=0,1,2$, which are suitable for the
implementation, are given in \eqref{equ:aT2-k0}, \eqref{equ:aT2-k1}
and \eqref{equ:aT2-k2}, respectively. The implementation hinges on the
stable discretization of Bernoulli functions, see Appendix
\ref{subsec:Bernoulli}. In addition, the SAFE schemes are shown to
have limiting schemes for vanishing diffusion coefficient $\alpha$, \commentone{
which result in a family of upwind schemes according to the limit of Bernoulli functions},
see Appendix \ref{subsec:limiting}.

\subsection{SAFE schemes}
Let $V_h = \{ v_h \in \mathcal{P}_1^-\Lambda^k(\mathcal{T}_h):~
\tr(v_h) = 0~\mbox{on }\Gamma_0\}$.  Having the local SAFE bilinear
forms \eqref{equ:general-aT}, the global bilinear forms are then
obtained by summing over all the local forms and adding the low-order
terms, i.e.,
\begin{equation} \label{equ:bilinear-a}
a_h(w_h, v_h) = \sum_{T\in \mathcal{T}_h} a_{T}(w_h, v_h) + (\gamma
w_h, v_h). 
\end{equation}
Finally, the finite element approximations of the problems
\eqref{equ:conv-diff} read: Find $u_h \in V_h$ such that 
\begin{equation} \label{equ:SAFE}
a_{h}(u_h, v_h) = F(v_h) \qquad \forall v_h \in V_h.
\end{equation}

For the discretization of dual problems \eqref{equ:conv-diff-dual}, we
simply define $a_h^*(w_h, v_h) = a_h(v_h, w_h)$.  Then, the finite
element approximations of the problems \eqref{equ:conv-diff-dual}
read: Find $u_h \in V_h$ such that 
\begin{equation} \label{equ:SAFE-dual}
a_h^*(u_h, v_h) = F(v_h) \qquad \forall v_h \in V_h.
\end{equation}

\begin{remark}
In \cite[Section 5]{xu1999monotone}, the monotonicity of EAFE requires the
mass-lumping for the low-order term.
\end{remark}

\section{Analysis of Discrete Problems} \label{sec:analysis}
In this section, we analyse the SAFE schemes for the $H(D)$
convection-diffusion problems. As an essential tool, we first
present some local error estimates. Under the well-posedness  of
the model problems, we then establish the well-posedness for the
discrete problems.

\subsection{Local error estimates}
For simplicity, we denote $\bar{\Pi}_{\theta,T}^k = \bar{\Pi}_T^k
\tilde{\Pi}_{\theta,T}^k$. 

\begin{lemma} \label{lem:T-error}
For any $T\in \mathcal{T}_h$, if $g\in W^{1,p}(T)$ and $p>n$, we have 
\begin{equation} \label{equ:T-error}
\|g- \tilde{\Pi}_{\theta,T}^k g\|_{0,s,T} \lesssim
C(p)h_T^{1+n({1\over s}-{1\over p})}|g|_{1,p,T} \qquad 1\leq s \leq
\infty.
\end{equation}
Here, $C(p) \eqsim \max \{1, (p-n)^{-\sigma}\}$ where $\sigma$ is a
positive number determined by Sobolev embedding. In addition,
\eqref{equ:T-error} also holds when replacing
$\tilde{\Pi}_{\theta,T}^k$ by  $\bar{\Pi}_{\theta,T}^k$.
\end{lemma}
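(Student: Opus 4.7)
The plan is the classical three-step recipe: (i) show that $\tilde{\Pi}_{\theta,T}^k$ preserves constants; (ii) prove a scale-invariant $L^s$--$W^{1,p}$ stability estimate on a reference element whose constant tracks the $(p-n)^{-\sigma}$ blow-up of a Morrey embedding; (iii) combine (i) and (ii) via a Bramble--Hilbert argument and affine scaling.

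Step (i) is immediate: by Lemma~\ref{lem:s-avg}, $\tilde{\Pi}_{\theta,T}^k = H_{\theta,T}^k\Pi_T^k E_{\theta}$ is a projection onto $\mathcal{P}_1^-\Lambda^k(T)$, and every constant $k$-form lies in $\mathcal{P}_1^-\Lambda^k(T)$. The same holds for $\bar{\Pi}_{\theta,T}^k=\bar{\Pi}_T^k\tilde{\Pi}_{\theta,T}^k$, since the earlier lemmas gave $\bar{\Pi}_T^kc=c$ for constant $c$ when $k=1,2$ (with $\bar{\Pi}_T^3=\mathrm{id}$).

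For step (ii) I would pass to a reference element $\hat T$ via the affine map $x=x_T+h_T\hat x$. Thanks to the translation-invariance noted after \eqref{equ:diagram1}, the operator pulls back to $\hat{\tilde{\Pi}}_{\hat\theta,\hat T}^k$ with scaled parameter $\hat\theta:=h_T\theta$. Each degree of freedom $l_S^k(E_{\hat\theta}\hat g)$ is a $k$-dimensional integral of a function that, by Morrey's embedding $W^{1,p}(\hat T)\hookrightarrow C(\hat T)$ for $p>n$, satisfies $\|\hat g\|_{L^{\infty}(\hat T)}\le C(p)\|\hat g\|_{W^{1,p}(\hat T)}$ with $C(p)\eqsim\max\{1,(p-n)^{-\sigma}\}$ (e.g.\ Gilbarg--Trudinger, Theorem~7.17). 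Combining this with the lower bound $\dashint_S\exp(\hat\theta\cdot\hat x)\ge e^{-|\hat\theta|}$ for the harmonic denominator and the uniform bound on the $\mathcal{P}_1^-\Lambda^k(\hat T)$ basis gives the stability
\[
\|\hat{\tilde{\Pi}}_{\hat\theta,\hat T}^k\hat g\|_{L^s(\hat T)}\lesssim C(p)\,e^{2|\hat\theta|}\|\hat g\|_{W^{1,p}(\hat T)}.
\]

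For step (iii), write $\hat g-\hat{\tilde{\Pi}}_{\hat\theta,\hat T}^k\hat g=(\hat g-c)-\hat{\tilde{\Pi}}_{\hat\theta,\hat T}^k(\hat g-c)$ with $c$ the mean of $\hat g$ on $\hat T$, and invoke Deny--Lions to get $\|\hat g-c\|_{W^{1,p}(\hat T)}\lesssim|\hat g|_{W^{1,p}(\hat T)}$; together with the stability bound this yields $\|\hat g-\hat{\tilde{\Pi}}_{\hat\theta,\hat T}^k\hat g\|_{L^s(\hat T)}\lesssim C(p)|\hat g|_{W^{1,p}(\hat T)}$. The scaling identities $\|\cdot\|_{L^s(T)}=h_T^{n/s}\|\cdot\|_{L^s(\hat T)}$ and $|\cdot|_{W^{1,p}(T)}=h_T^{n/p-1}|\cdot|_{W^{1,p}(\hat T)}$ (with Piola pull-back contributing only balancing powers of $h_T$ for $k=1,2$) then give the claimed $h_T^{1+n(1/s-1/p)}$. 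The statement for $\bar{\Pi}_{\theta,T}^k$ is the same argument, using that $\bar{\Pi}_T^k$ is a bounded constant-valued projection whose weights $\omega_S^T|S|/|T|$ are shape-regular.

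The main obstacle is step (ii): tracking the $(p-n)^{-\sigma}$ blow-up of the Morrey constant carefully while keeping the $\hat\theta$-dependence harmless. Since $|\theta|\le\|\beta\|_{\infty}/\alpha_0$ under the standing assumptions, $|\hat\theta|\le h_T\|\beta\|_\infty/\alpha_0$ is absorbed in the implicit constant and no uniformity in the convection-dominated limit is asserted here; the Morrey constant is the only part that must be made explicit in $p$.
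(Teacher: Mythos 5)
Your overall architecture (preservation of constants, reference-element stability, Bramble--Hilbert plus affine/Piola scaling) is exactly the paper's, and steps (i) and (iii) are fine. The problem is in step (ii), and it is not cosmetic. You bound the numerator $l_{\hat S}^k(\exp(\hat\theta\cdot\hat x)\hat g)$ and the denominator $\dashint_{\hat S}\exp(\hat\theta\cdot\hat x)$ \emph{separately}, by $e^{|\hat\theta|}\|\hat g\|_\infty$ and $e^{-|\hat\theta|}$ respectively, which leaves a factor $e^{2|\hat\theta|}=e^{2h_T|\theta|}$ in the stability constant. Since $\theta=\beta/\alpha$, this factor is $e^{2h_T\|\beta\|_\infty/\alpha_0}$, which blows up in precisely the convection-dominated regime ($\alpha_0\to 0$ at fixed $h$) that the SAFE scheme is built for. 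You acknowledge this and disclaim uniformity in $\theta$, but the lemma as the paper uses it \emph{must} be uniform in $\theta$: Corollary~\ref{cor:stab-Pi}, which the paper extracts from this very proof, explicitly states that the hidden constant does not depend on $\theta$, and the downstream error constants $\Theta_1$ and $\Theta_2$ in Lemmas~\ref{lem:difference} and Theorem~\ref{thm:inf-sup-h} contain no exponential-in-$\theta$ factors. With your version of step (ii), those constants would all acquire a factor $e^{Ch\|\beta\|_\infty/\alpha_0}$ and the robustness claims would collapse.

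The fix is the one observation your decomposition misses: because $\exp(\theta\cdot x)>0$, the coefficient
\[
\frac{l_{\hat S}^k(\exp(\hat\theta\cdot\hat x)\,\hat g)}{\dashint_{\hat S}\exp(\hat\theta\cdot\hat x)}
=|\hat S|\,\frac{\int_{\hat S}\exp(\hat\theta\cdot\hat x)\,(\hat g\cdot \tau_{\hat S})}{\int_{\hat S}\exp(\hat\theta\cdot\hat x)}
\]
is $|\hat S|$ times a \emph{weighted average} of (the tangential/normal component of) $\hat g$ over $\hat S$ with respect to the probability measure $\exp(\hat\theta\cdot\hat x)\,d\hat x/\int_{\hat S}\exp(\hat\theta\cdot\hat x)$, hence bounded by $|\hat S|\,\|\hat g\|_{0,\infty,\hat S}$ with no $\theta$-dependence whatsoever (for $k=0$ the ratio is exactly $\hat g(\hat a)$). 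Keeping numerator and denominator together in this way, the rest of your argument — Morrey's embedding with the $(p-n)^{-\sigma}$ constant, Deny--Lions, and the scaling identities — goes through verbatim and recovers the paper's proof.
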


\begin{proof}
Consider a change of variable from the standard reference element
$\hat{T}$ to $T$: $x = \mathcal{F}(\hat{x}) = B\hat{x} + b_0$. From
the definition of $\tilde{\Pi}_{\theta,T}^k$ in
\eqref{equ:tilde-proj}, the corresponding projection can be written as 
$$ 
\hat{\tilde{\Pi}}_{\hat{\theta}, \hat{T}}^k \hat{g}
= \sum_{\hat{S} \in \mathcal{S}_{\hat{T}}^k}
\frac{l_{\hat{S}}^k(\exp(\hat{\theta}\cdot
\mathcal{F}(\hat{x})) \hat{g})}{\dashint_{\hat{S}} \exp(\hat{\theta} \cdot
\mathcal{F}(\hat{x}))}
\varphi_{\hat{S}}, \qquad \text{where } \hat{\theta}(\hat{x}) =
\theta(\mathcal{F}(\hat{x})). 
$$ 
Since the coefficient of $\varphi_{\hat{S}}$ is a weighted average, we
have 
$$ 
\|\hat{\tilde{\Pi}}_{\hat{\theta}, \hat{T}}^k \hat{g}\|_{0,s,\hat{T}}
\lesssim \|\hat{g}\|_{0,\infty,\hat{T}},
$$ 
where the hidden constant does not depend on $\theta$. By the Sobolev
embedding theorem (cf. \cite{adams2003sobolev}), $W^{1,p}(\hat{T})
\hookrightarrow L^{\infty}(\hat{T})$ when $p>n$, we get  
$$ 
\|\hat{\tilde{\Pi}}_{\hat{\theta}, \hat{T}}^k \hat{g}\|_{0,s,\hat{T}}
\lesssim \|\hat{g}\|_{0,\infty,\hat{T}} \lesssim C(p)
\|\hat{g}\|_{1,p,\hat{T}}.
$$ 

From the definition of the interpolation operator,
$\tilde{\Pi}_{\theta,T}^k g = g$ (or $\bar{\Pi}_{\theta,T}^kg = g$) if
$g$ is constant on $T$.  By the Bramble-Hilbert lemma and scaling
argument (see \cite[Section 2.1.3]{boffi2013mixed} for Piola
transformation for $H(\curl)$ and $H(\div)$ spaces), we have
$$
\|g-\tilde{\Pi}_{\theta,T}^{k} g\|_{0,s,T}\lesssim h_T^{n\over s}
\|\hat g-\hat{\tilde \Pi}_{\hat{\theta}, \hat T}^{k}\hat g\|_{0,s,\hat T}
\lesssim C(p) h_T^{n\over s}|\hat g|_{1, p,\hat T} \lesssim
C(p)h_T^{1 + n(\frac{1}{s} - \frac{1}{p})} |g|_{1,p,T}.
$$
The estimate for $\bar{\Pi}_{\theta,T}^k$ follows from a similar
argument. 
\end{proof}
In the proof of Lemma \ref{lem:T-error}, we have the following
stability of $\tilde{\Pi}_{\theta,T}$.
\begin{corollary} \label{cor:stab-Pi}
For any $T\in \mathcal{T}_h$, if $w \in L^\infty(T)$, we have 
\begin{equation} \label{equ:stab-Pi}
\|\tilde{\Pi}_{\theta,T}w\|_{0,s,T} \lesssim h_T^{n\over
s}\|w\|_{0,\infty,T} \qquad 1 \leq s \leq \infty,
\end{equation}
where the hidden constant does not depend on $\theta$.
\end{corollary}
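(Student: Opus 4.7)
The plan is to read off this corollary directly from the reference-element inequality that was the first half of the proof of Lemma \ref{lem:T-error}, and then apply the same scaling argument.

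Recall that the proof of Lemma \ref{lem:T-error} started by pulling back to the standard reference element $\hat T$ under $x=\mathcal{F}(\hat x)=B\hat x+b_0$, and by writing
\[
\hat{\tilde{\Pi}}_{\hat\theta,\hat T}^k\hat w=\sum_{\hat S\in\mathcal S_{\hat T}^k}\frac{l_{\hat S}^k\bigl(e^{\hat\theta\cdot \mathcal F(\hat x)}\hat w\bigr)}{\dashint_{\hat S}e^{\hat\theta\cdot \mathcal F(\hat x)}}\,\varphi_{\hat S},
\]
then observing that each coefficient is a \emph{positive weighted average} of a component of $\hat w$ along $\hat S$. The crucial point is that, because the weight $e^{\hat\theta\cdot\mathcal F(\hat x)}$ is positive, such a weighted average is pointwise bounded by $\|\hat w\|_{0,\infty,\hat T}$ irrespective of how the exponential concentrates, so the hidden constant is independent of $\hat\theta$. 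Since the basis functions $\varphi_{\hat S}$ have fixed $L^s(\hat T)$ norm and $|\mathcal S_{\hat T}^k|$ is bounded, this yields the reference-element bound
\[
\|\hat{\tilde{\Pi}}_{\hat\theta,\hat T}^k\hat w\|_{0,s,\hat T}\lesssim\|\hat w\|_{0,\infty,\hat T},\qquad 1\le s\le\infty,
\]
with the implied constant independent of $\hat\theta$.

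Having isolated this inequality, the second step is a routine scaling argument identical to the one invoked at the end of the proof of Lemma \ref{lem:T-error}. Using the appropriate Piola pullback for $\mathcal P_1^-\Lambda^k$ (gradient pullback for $k=0$, covariant for $k=1$, contravariant for $k=2$, volume pullback for $k=3$), the $L^s$ norms on $T$ and $\hat T$ are related by a factor $h_T^{n/s}$ up to shape-regularity constants, while the $L^\infty$ norm of $w$ is preserved (again up to shape-regularity). Combining these two ingredients gives
\[
\|\tilde\Pi_{\theta,T}^k w\|_{0,s,T}\lesssim h_T^{n/s}\|\hat{\tilde\Pi}_{\hat\theta,\hat T}^k\hat w\|_{0,s,\hat T}\lesssim h_T^{n/s}\|\hat w\|_{0,\infty,\hat T}\lesssim h_T^{n/s}\|w\|_{0,\infty,T},
\]
which is the claimed estimate.

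I do not expect any serious obstacle: the only substantive input is the positivity of the exponential weight, which makes every coefficient a genuine weighted average and thereby gives the $\theta$-independence; everything else is reference-element boundedness of a finite sum of fixed basis functions together with the standard Piola scaling. In fact, the statement is essentially a by-product of the proof of Lemma \ref{lem:T-error}, so the proof can be presented in just a few lines by quoting that argument with $s=\infty$ omitted from the source-space side.
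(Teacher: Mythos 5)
Your proof is correct and follows exactly the route the paper intends: the corollary is stated as a by-product of the proof of Lemma \ref{lem:T-error}, whose first half establishes precisely the $\theta$-independent reference-element bound $\|\hat{\tilde{\Pi}}_{\hat\theta,\hat T}^k\hat w\|_{0,s,\hat T}\lesssim\|\hat w\|_{0,\infty,\hat T}$ via the positive-weighted-average observation, and the $h_T^{n/s}$ factor then comes from the same Piola scaling used at the end of that proof. (The only slightly loose phrase is that the $L^\infty$ norm is ``preserved'' under the covariant/contravariant pullbacks --- it actually scales by a power of $h_T$ that cancels against the corresponding power in the $L^s$ relation --- but the net estimate is exactly as you state.)
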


We now want to analyse the behavior of the
$\tilde{\Pi}_{\bar{\theta},T}^{k+1}J_{\bar{\theta}}^k w -
J_{\bar{\theta},T}^k \tilde{\Pi}_{\theta,T}^k w$. According to the
commutative diagram \eqref{equ:diagram2}, we deduce that 
$$ 
\tilde{\Pi}_{\bar{\theta},T}^{k+1}J_{\bar{\theta}}^k w -
J_{\bar{\theta},T}^k \tilde{\Pi}_{\theta,T}^k w 
= J_{\bar{\theta},T}^k (\tilde{\Pi}_{\bar{\theta},T}^k w - 
\tilde{\Pi}_{\theta,T}^k w).
$$
Let $x_c$ be the barycenter of $T$. The main observation is that
$\tilde{\Pi}_{\theta,T}^k$ (resp. $\tilde{\Pi}_{\bar{\theta},T}$) does
not change under the transformation $\theta \cdot x \mapsto \theta
\cdot x - \bar{\theta}\cdot x_c$ (resp. $\bar{\theta} \cdot x \mapsto
\bar{\theta} \cdot x - \bar{\theta}\cdot x_c$ ).

\begin{lemma} \label{lem:commute-diff}
For any $T\in \mathcal{T}_h$, if $w\in W^{1,p}(T)$, $p>n$, and $h_T
\lesssim \|\theta\|_{1,\infty,T}^{-1}$, we have 
$$ 
\|(\tilde{\Pi}_{\theta,T}^k -
\tilde{\Pi}_{\bar{\theta},T}^k)w\|_{0,s,T} \lesssim C(p)h_T^{2 +
n({1\over s} - {1\over p})} |\theta|_{1,\infty,T}
|w|_{1,p,T} \quad 1\leq s \leq \infty.
$$ 
\end{lemma}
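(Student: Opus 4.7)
The plan is to exploit the invariance of $\tilde{\Pi}^k$ under constant shifts of the exponent, then bound the difference by a mean-value argument in $\theta$ that exploits constant-reproduction and Sobolev embedding.

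First, I would observe that the numerator $l_S(e^\phi v)$ and denominator $\dashint_S e^\phi$ in \eqref{equ:tilde-proj} scale identically under $\phi\mapsto\phi+c$. Consequently, replacing $\theta\cdot x$ by $\theta(x)\cdot x-\bar\theta\cdot x_c$ and $\bar\theta\cdot x$ by $\bar\theta\cdot(x-x_c)$ leaves the two interpolations unchanged; under the hypothesis $h_T\|\theta\|_{1,\infty,T}\lesssim 1$ and the boundedness of $\Omega$, both shifted exponents are $O(1)$ on $T$, so the corresponding $e^\phi$ are uniformly bounded above and below after passage to the reference element $\hat T$.

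Second, I would parametrize $\theta_t:=\bar\theta+t(\theta-\bar\theta)$, $t\in[0,1]$, set $F(t):=\tilde{\Pi}_{\theta_t,T}^kw$, and write $(\tilde{\Pi}_{\theta,T}^k-\tilde{\Pi}_{\bar\theta,T}^k)w=\int_0^1 F'(t)\,dt$. A direct differentiation of \eqref{equ:tilde-proj} gives
$$F'(t)=\sum_{S\in\mathcal{S}_T^k}\frac{l_S\!\big(((\theta-\bar\theta)\cdot x-\chi_S(t))\,e^{\theta_t\cdot x}\,w\big)}{\dashint_S e^{\theta_t\cdot x}}\,\varphi_S,$$
where $\chi_S(t)$ is the $e^{\theta_t\cdot x}$-weighted average of $(\theta-\bar\theta)\cdot x$ on $S$. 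Since $\chi_S(t)$ lies between the infimum and supremum of that quantity, the factor in parentheses is pointwise bounded by the oscillation of $(\theta-\bar\theta)\cdot x$ on $T$. From $\nabla_x[(\theta-\bar\theta)\cdot x]=(D\theta)^Tx+(\theta-\bar\theta)$ and the boundedness of $|x|$ on $\Omega$, Lipschitz-times-diameter yields $\mathrm{osc}_T\!\left((\theta-\bar\theta)\cdot x\right)\lesssim h_T|\theta|_{1,\infty,T}$.

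Finally, a direct check from \eqref{equ:tilde-proj} shows that each $\tilde{\Pi}_{\theta_t,T}^k$ reproduces constants (as already used in the proof of Lemma~\ref{lem:T-error}), so $F'(t)$ annihilates constants and I may replace $w$ by $w-\bar w$ for any constant $\bar w$. Passing to $\hat T$, the Sobolev embedding $W^{1,p}(\hat T)\hookrightarrow L^\infty(\hat T)$ for $p>n$ gives $\|\hat w-\bar{\hat w}\|_{0,\infty,\hat T}\lesssim C(p)|\hat w|_{1,p,\hat T}$; scaling back with the Piola conversions used in the proof of Lemma~\ref{lem:T-error} contributes $h_T^{n/s}\cdot h_T^{1-n/p}$, which combines with the $h_T|\theta|_{1,\infty,T}$ factor to produce the claimed bound. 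The main obstacle is the oscillation estimate: the smallness of $\theta-\bar\theta$ does not directly transfer to $\nabla[(\theta-\bar\theta)\cdot x]$ because of the $(D\theta)^T x$ term, so the argument must absorb the domain diameter into the implicit constant in order to produce exactly one power of $h_T$ (rather than an $O(1)$ bound) from the Lipschitz-times-diameter step.
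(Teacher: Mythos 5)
Your proof is correct and follows essentially the same route as the paper's: normalize the exponent using the shift-invariance about $\bar\theta\cdot x_c$, show the interpolation coefficients differ by $O(h_T|\theta|_{1,\infty,T})$ (absorbing the domain diameter), use that both operators reproduce constants to replace $w$ by $w-\bar w$, and conclude by Bramble--Hilbert, Sobolev embedding, and scaling. The only difference is presentational: you bound the coefficient difference via the homotopy $\theta_t$ and the identity $\int_0^1 F'(t)\,dt$ with the weighted mean $\chi_S(t)$ subtracted, whereas the paper estimates the perturbation of the numerator and denominator of the ratio directly; both yield the same one power of $h_T$ from the oscillation of $(\theta-\bar\theta)\cdot x$.
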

\begin{proof} 
In light of the definition of $\tilde{\Pi}_{\theta,T}^k$ in
\eqref{equ:tilde-proj}, dividing $\exp(-\bar{\theta}\cdot x_c)$ on
both numerator and denominator of the coefficient of $\varphi_S$, we
have 
$$ 
\tilde{\Pi}_{\theta,T}^k v 
= \sum_{S \in \mathcal{S}_T^k} \frac{l_S^k(\exp(\theta\cdot
x)v)}{\dashint_S \exp(\theta\cdot x)} \varphi_S
= \sum_{S \in \mathcal{S}_T^k} \frac{l_S^k(\exp(\theta\cdot
x - \bar{\theta}\cdot x_c)v)}{\dashint_S \exp(\theta\cdot x -
\bar{\theta}\cdot x_c)} \varphi_S.
$$ 
Then, for any $x \in S$, we have $\|\theta\cdot x - \bar{\theta}\cdot
x_c\|_{0,\infty,T} \lesssim h_T\|\theta\|_{1,\infty,T} \lesssim 1$ and
therefore 
$$ 
\begin{aligned}
|\exp(\theta \cdot x - \bar{\theta}\cdot x_c) - \exp(\bar{\theta}\cdot
x - \bar{\theta}\cdot x_c)| &= 
\exp(\bar{\theta}\cdot x - \bar{\theta}\cdot x_c)|1 - \exp((\theta -
\bar{\theta}) \cdot x)| \\
& \lesssim 
h_T |\theta|_{1,\infty,T}.
\end{aligned}
$$ 
Then, we have the estimates of the numerator and denominator 
$$ 
\begin{aligned}
|\dashint_S \exp(\theta\cdot x - \bar{\theta}\cdot x_c)v -
\exp(\bar{\theta}\cdot x - \bar{\theta}\cdot x_c) v| &\lesssim 
h_T |\theta|_{1,\infty,T} \frac{\|v\|_{0,1,S}}{|S|} \\
&\lesssim 
h_T |\theta|_{1,\infty,T} \|v\|_{0,\infty,S}, \\ 
1 \lesssim \dashint_S \exp(\theta\cdot x - \bar{\theta}\cdot x_c) \leq
\dashint_S &\exp(\bar{\theta}\cdot x - \bar{\theta}\cdot x_c) + Ch_T
|\theta|_{1,\infty,T}.
\end{aligned}
$$ 
Therefore, 
$$ 
\left|
\frac{l_S^k(\exp(\theta\cdot x)v)}{\dashint_S \exp(\theta\cdot x)} - 
\frac{l_S^k(\exp(\bar{\theta}\cdot x)v)}{\dashint_S \exp(\bar{\theta}
\cdot x)}\right| 
\lesssim h_T |\theta|_{1,\infty,T}\|v\|_{0,\infty,S}. 
$$ 
Note that, for any $w_h \in \mathcal{P}_1^-\Lambda^k(T)$, $
(\tilde{\Pi}_{\theta,T}^k - \tilde{\Pi}_{\bar{\theta},T}^k)w =
(\tilde{\Pi}_{\theta,T}^k - \tilde{\Pi}_{\bar{\theta},T}^k)(w - w_h)$. 
Taking $v = w - w_h$, by the Bramble-Hilbert lemma and the standard
scaling argument, we obtain the desired result.
\end{proof}

\subsection{Error Analysis}
Define the special interpolations $\tilde{\Pi}_{\theta,h}^k$ by $
\tilde{\Pi}_{\theta,h}^k w|_T := \tilde{\Pi}_{\theta,T}^k w$ for any
$T\in \mathcal{T}_h$.  \commentone{In light of local error estimates}, we first give an estimate for the difference
between continuous and approximating bilinear forms. \commentone{Note that the solution of convection-diffusion problems may have boundary or internal layer, the analysis in this section hinges on the assumption that $h$ is sufficiently small.} 

\begin{lemma} \label{lem:difference}
For any $T\in \mathcal{T}_h$, assume that $h_T
\lesssim \|\theta\|_{1,\infty,T}^{-1}$, 
$J_{\bar{\theta}}^k w = d^k u + i_{\bar{\theta}}^*u \in W^{1,p}(T)$ and $w
\in W^{1,r}(T)$ where $p, r > n$. Then, the following inequality holds 
\begin{equation}
|a(w, v_h) - a_h(\tilde{\Pi}_{\theta,h}^kw, v_h)| \lesssim \Theta_1(\alpha,
\theta, \gamma, w)h \|v_h\|_{H\Lambda,\Omega} \qquad \forall v_h \in V_h,
\end{equation}
where 
\begin{equation} \label{equ:C1} 
\begin{aligned}
\Theta_1&(\alpha,\theta,\gamma,w):= \\ 
\bigg\{ &
\sum_{T\in \mathcal{T}_h } \left(
\|\alpha\|_{0,\infty,T}|\theta|_{1,\infty,T}\|w\|_{0,T} \right)^2
\\ + & 
\sum_{T\in \mathcal{T}_h } 
 \left( \|\alpha\|_{0,\infty,T}C(p)h_T^{n({1\over2}- {1\over p})} 
|J_{\bar{\theta}} w|_{1,p,T} \right)^2 \\
+ &
\sum_{T\in \mathcal{T}_h } 
\left( \|\alpha\|_{0,\infty,T} |\theta|_{1,\infty,T}(1 +
h_T\|\theta\|_{0,\infty,T}) C(r)h_T^{n({1\over2}- {1\over r})}
|w|_{1,r,T} \right)^2 \\
+ & 
\sum_{T\in \mathcal{T}_h}\left(
\|\gamma\|_{0,\infty,T}C(r)h_T^{n({1\over2}- {1\over r})} |w|_{1,r,T}
\right)^2 \bigg\}^{1\over 2}.
\end{aligned}
\end{equation}
\end{lemma}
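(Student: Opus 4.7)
The plan is to decompose $a(w, v_h) - a_h(\tilde{\Pi}_{\theta,h}^k w, v_h)$ into four local pieces, one for each of the four summands appearing inside the brace of $\Theta_1^2$, bound each piece elementwise, and then combine them by Cauchy--Schwarz over the triangulation.

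First, since $i_\beta^* = \alpha \, i_\theta^*$, I rewrite the continuous bilinear form as $a(w,v_h) = \sum_T (\alpha J_\theta^k w, d^k v_h)_T + (\gamma w, v_h)$. Inserting the telescoping quantities $\pm \alpha J_{\bar\theta}^k w$, $\pm \alpha \bar\Pi_{\bar\theta,T}^{k+1} J_{\bar\theta}^k w$, and $\pm \gamma \tilde\Pi_{\theta,h}^k w$ yields the elementwise decomposition
\begin{align*}
\mathrm{(I)}_T &:= \bigl( \alpha (i_\theta^* - i_{\bar\theta}^*) w, d^k v_h \bigr)_T, \\
\mathrm{(II)}_T &:= \bigl( \alpha (I - \bar\Pi_{\bar\theta,T}^{k+1}) J_{\bar\theta}^k w, d^k v_h \bigr)_T, \\
\mathrm{(III)}_T &:= \bigl( \alpha \bar\Pi_T^{k+1} J_{\bar\theta,T}^k ( \tilde\Pi_{\bar\theta,T}^k - \tilde\Pi_{\theta,T}^k) w, d^k v_h \bigr)_T, \\
\mathrm{(IV)}_T &:= \bigl( \gamma (w - \tilde\Pi_{\theta,T}^k w), v_h \bigr)_T,
\end{align*}
where the rewriting giving $\mathrm{(III)}_T$ uses $\bar\Pi_{\bar\theta,T}^{k+1} J_{\bar\theta}^k w = \bar\Pi_T^{k+1} J_{\bar\theta,T}^k \tilde\Pi_{\bar\theta,T}^k w$, itself obtained by combining the definition $\bar\Pi_{\bar\theta,T}^{k+1} = \bar\Pi_T^{k+1}\tilde\Pi_{\bar\theta,T}^{k+1}$ with the exact commutativity $\tilde\Pi_{\bar\theta,T}^{k+1} J_{\bar\theta}^k = J_{\bar\theta,T}^k \tilde\Pi_{\bar\theta,T}^k$ from Fig.~\ref{fig:3D-commutative}, which is available because $\bar\theta$ is constant on $T$.

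Next I bound each piece locally. For $\mathrm{(I)}_T$, the approximation \eqref{equ:bar-theta} yields $\|\theta - \bar\theta\|_{0,\infty,T} \lesssim h_T |\theta|_{1,\infty,T}$ and hence $|\mathrm{(I)}_T| \lesssim \|\alpha\|_{0,\infty,T} h_T |\theta|_{1,\infty,T} \|w\|_{0,T} \|d^k v_h\|_{0,T}$. For $\mathrm{(II)}_T$ and $\mathrm{(IV)}_T$, Lemma~\ref{lem:T-error} applied with $s=2$ and with $g = J_{\bar\theta}^k w$ (resp.\ $g = w$) delivers the second and fourth summands of $\Theta_1^2$ directly. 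The delicate piece is $\mathrm{(III)}_T$, for which I need the local stability bound
\[
\|\bar\Pi_T^{k+1} J_{\bar\theta,T}^k e_h\|_{0,T} \lesssim (h_T^{-1} + \|\bar\theta\|_{0,\infty,T}) \|e_h\|_{0,T} \qquad \forall\, e_h \in \mathcal{P}_1^-\Lambda^k(T),
\]
which I obtain from the explicit formula \eqref{equ:exp-flux2}, the uniform boundedness of the weights $(\dashint_S \exp(\bar\theta \cdot x))^{-1}$ under $h_T \|\theta\|_{1,\infty,T} \lesssim 1$ (akin to Corollary~\ref{cor:stab-Pi}), an inverse inequality for $d^k$ on $\mathcal{P}_1^-\Lambda^k(T)$, and the boundedness of the local projection $\bar\Pi_T^{k+1}$. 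Applying this bound to $e_h = (\tilde\Pi_{\bar\theta,T}^k - \tilde\Pi_{\theta,T}^k)w$ and invoking Lemma~\ref{lem:commute-diff} with $s=2$ yields the third summand of $\Theta_1^2$; the characteristic factor $1 + h_T\|\theta\|_{0,\infty,T}$ arises from the algebraic identity $h_T^{-1} + \|\bar\theta\|_{0,\infty,T} = h_T^{-1}(1 + h_T\|\bar\theta\|_{0,\infty,T})$ together with $\|\bar\theta\|_{0,\infty,T} \lesssim \|\theta\|_{0,\infty,T}$.

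Finally, I apply the Cauchy--Schwarz inequality elementwise and then over $T \in \mathcal{T}_h$: the factors $\|d^k v_h\|_{0,T}$ and $\|v_h\|_{0,T}$ combine to at most $\|v_h\|_{H\Lambda,\Omega}$, one factor of $h_T \leq h$ is pulled out of each local bound (using $h_T^{1 + n(1/2 - 1/p)} = h_T \cdot h_T^{n(1/2-1/p)}$), and what remains inside the $\ell^2$ sum is exactly $\Theta_1$. The main obstacle is the step for $\mathrm{(III)}_T$: it is the only place where a derivative is lost (through the $h_T^{-1}$ coming from the inverse inequality), and that loss must be compensated precisely by the extra factor $h_T$ gained in Lemma~\ref{lem:commute-diff} over Lemma~\ref{lem:T-error} in order to preserve the overall $O(h)$ rate.
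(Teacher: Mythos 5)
Your proposal is correct and follows essentially the same route as the paper: the identical four-term telescoping decomposition (using $\bar\Pi_{\bar\theta,T}^{k+1}J_{\bar\theta}^k = \bar\Pi_T^{k+1}J_{\bar\theta,T}^k\tilde\Pi_{\bar\theta,T}^k$ for constant $\bar\theta$), Lemma~\ref{lem:T-error} for the second and fourth terms, and the combination of the weighted-average stability, an inverse inequality, and Lemma~\ref{lem:commute-diff} for the third term. The only cosmetic difference is that you package the third term as an $L^2\to L^2$ stability bound for $\bar\Pi_T^{k+1}J_{\bar\theta,T}^k$ and invoke Lemma~\ref{lem:commute-diff} with $s=2$, whereas the paper passes through $L^\infty$ via Corollary~\ref{cor:stab-Pi} and uses $s=\infty$; the powers of $h_T$ agree.
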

\begin{proof}
By \eqref{equ:general-aT} and the
diagram \eqref{equ:diagram2}, we have 
$$ 
\begin{aligned}
a(w, v_h) - a_h(\tilde{\Pi}_{\theta,h}^k w, v_h) &=
\sum_{T\in \mathcal{T}_h}(\alpha J_{\theta}^k w - \alpha
\bar{\Pi}_T^{k+1} J_{\bar{\theta},T}^k \tilde{\Pi}_{\theta,T}^kw, d^k
v_h)_T \\ 
&~ + \sum_{T\in \mathcal{T}_h} (\gamma (w-\tilde{\Pi}_{\theta,T}^kw),
    v_h)_T
\\&= \sum_{T\in \mathcal{T}_h} \underbrace{ 
(\alpha i_{\theta - \bar{\theta}}^*w, d^k v_h)_T
}_{I_{1,T}} + 
\underbrace{ ( \alpha (I - \bar{\Pi}_{\bar{\theta},T}^{k+1})
J_{\bar{\theta}}^k w , d^k v_h)_T }_{I_{2,T}} 
\\ 
&~ + \sum_{T\in \mathcal{T}_h} \underbrace{(\alpha \bar{\Pi}_T^{k+1}
J_{\bar{\theta},T}^k(\tilde{\Pi}_{\bar{\theta},T}^k -
\tilde{\Pi}_{\theta,T}^k) w, d^k v_h)_T}_{I_{3,T}}
\\
&~ + \sum_{T\in \mathcal{T}_h} \underbrace{(\gamma
    (w-\tilde{\Pi}_{\theta,T}^kw), v_h)_T}_{I_{4,T}}.
\end{aligned}
$$ 
Clearly, 
\begin{equation} \label{equ:I1T} 
|I_{1,T}| \lesssim h_T\|\alpha\|_{0,\infty,T} |\theta|_{1,\infty,T}
\|w\|_{0,T} \|d^kv_h\|_{0,T}. 
\end{equation} 
Thanks to Lemma \ref{lem:T-error}, we have 
\begin{align} 
|I_{2,T}| &\leq \|\alpha\|_{0,\infty,T} C(p) h_T^{1 + n({1\over 2} -
    {1\over p})} |J_{\bar{\theta}}^k w|_{1,p,T} \|d^k v_h\|_{0,T},
\label{equ:I2T} \\
|I_{4,T}| &\leq \|\gamma\|_{0,\infty,T} C(r) h_T^{1 + n({1\over 2} -
    {1\over r})} |w|_{1,r,T} \|v_h\|_{0,T}.
\label{equ:I4T} 
\end{align}
Using inverse inequality, Corollary \ref{cor:stab-Pi} and Lemma
\ref{lem:commute-diff}, we have 
\begin{equation} \label{equ:I3T}
\begin{aligned}
|I_{3,T}| & \lesssim \|\alpha\|_{0,\infty,T} \|J_{\bar{\theta},T}^k
(\tilde{\Pi}_{\bar{\theta},T}^k - \tilde{\Pi}_{\theta,T}^k) w \|_{0,T}
\|d^k v_h\|_{0,T} \\
& = \|\alpha\|_{0,\infty,T}
\|\tilde{\Pi}_{\bar{\theta},T}^{k+1}J_{\bar{\theta}}^k
(\tilde{\Pi}_{\bar{\theta},T}^k - \tilde{\Pi}_{\theta,T}^k) w \|_{0,T}
\|d^k v_h\|_{0,T} \\
& \lesssim \|\alpha\|_{0,\infty,T}  h_T^{n\over 2}\|J_{\bar{\theta}}^k
(\tilde{\Pi}_{\bar{\theta},T}^k - \tilde{\Pi}_{\theta,T}^k) w
\|_{0,\infty,T} \|d^k v_h\|_{0,T} \\
& \lesssim \|\alpha\|_{0,\infty,T} h_T^{n\over 2} 
\big( \|d (\tilde{\Pi}_{\bar{\theta},T}^k - \tilde{\Pi}_{\theta,T}^k)
    w\|_{0,\infty,T} \\
& \qquad \qquad \qquad + \|\theta\|_{0,\infty,T} 
\|(\tilde{\Pi}_{\bar{\theta},T}^k - \tilde{\Pi}_{\theta,T}^k)
w\|_{0,\infty,T} \big) \|d^kv_h\|_{0,T} \\
& \lesssim \|\alpha\|_{0,\infty,T} |\theta|_{1,\infty,T} (1 +
h_T\|\theta\|_{0,\infty,T}) C(r)h_T^{1 + n({1\over 2}-{1\over r})}
|w|_{1,r,T} \|d^k v_h\|_{0,T}.
\end{aligned}
\end{equation}
By \eqref{equ:I1T} -- \eqref{equ:I4T}, we obtain the desired results. 
\end{proof}

\begin{remark}
\commentone{
In the above lemma, if the diffusion coefficient $\alpha$ is piecewise
constant, we have $ \|\alpha\|_{0,\infty, T}|\theta|_{1,\infty,T} =
|\beta|_{1,\infty,T}$, which describes the variation rate of
convection speed in element $T$. 
}
\end{remark}

\begin{theorem} \label{thm:inf-sup-h}
Under the Assumption \ref{asp}, for sufficiently small $h$, both
\eqref{equ:SAFE} and \eqref{equ:SAFE-dual} are well-posed and
furthermore the following inf-sup conditions hold:
\begin{equation} \label{equ:inf-sup-h}
\inf_{w_h\in V_h}\sup_{v_h\in V_h} 
\frac{a_h(w_h,v_h)}{\|w_h\|_{H\Lambda,\Omega}\|v_h\|_{H\Lambda,\Omega}} =
\inf_{w_h\in V_h}\sup_{v_h\in V_h}
\frac{a_h^*(w_h,v_h)}{\|w_h\|_{H\Lambda,\Omega}\|v_h\|_{H\Lambda,\Omega}}
= c_1>0.
\end{equation}
\end{theorem}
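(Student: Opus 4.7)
The proof will transfer the continuous inf-sup of Assumption \ref{asp} to the discrete level using Lemma \ref{lem:difference}. Because $V_h$ is finite-dimensional and the SAFE system is square, the two inf-sup forms $\inf_{w_h}\sup_{v_h}$ and $\inf_{v_h}\sup_{w_h}$ for $a_h$ coincide, so I will prove the latter: for every $v_h \in V_h$, there exists $w_h \in V_h$ such that $a_h(w_h, v_h) \geq c_1 \|w_h\|_{H\Lambda,\Omega}\|v_h\|_{H\Lambda,\Omega}$. The analogous inf-sup for $a_h^*$ will follow from the symmetric argument swapping the roles of $a$ and $a^*$, with the same constant by finite-dimensional duality; well-posedness of \eqref{equ:SAFE} and \eqref{equ:SAFE-dual} is then immediate from the square-system inf-sup.

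The construction of $w_h$ is the crux. Given $v_h \in V_h \subset V$, the continuous inf-sup \eqref{equ:inf-sup} applied to $a^*$ produces $w \in V$ with $a(w, v_h) = a^*(v_h, w) \geq c_0 \|v_h\|_{H\Lambda,\Omega}\|w\|_{H\Lambda,\Omega}$. I then set $w_h := \tilde{\Pi}_{\theta,h}^k w \in V_h$. Applying Lemma \ref{lem:difference} with $w$ in the first slot and $v_h$ as the discrete test function yields
$$a_h(w_h, v_h) \geq a(w, v_h) - C\,\Theta_1(\alpha,\theta,\gamma,w)\, h\, \|v_h\|_{H\Lambda,\Omega}.$$
Combining this with the lower bound on $a(w, v_h)$ from the continuous inf-sup, and using Corollary \ref{cor:stab-Pi} applied elementwise and summed over $\mathcal{T}_h$ to control $\|w_h\|_{H\Lambda,\Omega}$ by $\|w\|_{H\Lambda,\Omega}$ up to higher-order corrections, and finally choosing $h$ small enough that $C\,\Theta_1(w)\,h \leq \frac{c_0}{2}\|w\|_{H\Lambda,\Omega}$, I obtain the desired discrete inf-sup with some $c_1 > 0$ independent of $h$.

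The principal obstacle is the regularity hypothesis of Lemma \ref{lem:difference}: it requires $w \in W^{1,r}(\Omega)$ and $J_{\bar\theta}^k w \in W^{1,p}(\Omega)$ for some $p, r > n$, whereas the abstract continuous inf-sup only delivers $w \in V = H\Lambda$. Bridging this gap is the technical heart of the argument. I expect to invoke elliptic-type regularity for the convection-diffusion operator on the polyhedral domain, yielding a quantitative estimate of the form $\|w\|_{W^{1,p}} + \|J_{\bar\theta}^k w\|_{W^{1,p}} \leq C \|v_h\|_{H\Lambda,\Omega}$ that allows $\Theta_1(w)\,h$ to be absorbed uniformly in $v_h$; alternatively, a density/smoothing argument approximating $w$ by a regular $\tilde w \in V \cap W^{1,p}$ would serve at the cost of letting the admissible mesh-size threshold depend on the approximation tolerance. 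Once this regularity step is secured, the remaining steps form a routine consistency-plus-stability perturbation.
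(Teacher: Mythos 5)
Your overall strategy (perturb a known inf-sup condition by the consistency estimate of Lemma \ref{lem:difference}) is the right one, but you apply the perturbation in the wrong place, and this creates a gap you identify but do not close. You take the continuous sup-achiever $w\in V$ supplied by Assumption \ref{asp} and insert it into Lemma \ref{lem:difference}, which forces you to assume $w\in W^{1,r}$ and $J_{\bar\theta}^k w\in W^{1,p}$ with $p,r>n$. Neither of your proposed fixes works: (i) the element $w$ is merely a near-maximizer of $a(\cdot,v_h)$, not the solution of a boundary value problem with controlled data, so there is no elliptic regularity estimate of the form $\|w\|_{W^{1,p}}\lesssim \|v_h\|_{H\Lambda,\Omega}$ to invoke (and for $H(\curl)$/$H(\div)$ operators on polyhedra such $W^{1,p}$, $p>3$, regularity is generally false anyway); (ii) a density/smoothing argument makes the admissible mesh size depend on the particular $w$, hence on $v_h$, which destroys the uniformity required for an inf-sup constant. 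Even granting the regularity, the quantity $\Theta_1(\alpha,\theta,\gamma,w)$ involves $|w|_{1,r,T}$ and $|J_{\bar\theta}^k w|_{1,p,T}$, which are not bounded by $\|w\|_{H\Lambda,\Omega}$, so the condition $C\,\Theta_1(w)\,h\le \tfrac{c_0}{2}\|w\|_{H\Lambda,\Omega}$ cannot be met by a choice of $h$ independent of $v_h$.

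The paper sidesteps all of this by splitting the argument into two steps, each of which avoids putting a non-discrete function into Lemma \ref{lem:difference}. First, the continuous form $a$ restricted to $V_h\times V_h$ already satisfies a discrete inf-sup condition with constant $c_0/2$ for small $h$; this is the classical Schatz/Xu compact-perturbation argument, which the paper simply cites. Second, the difference $|a(w_h,v_h)-a_h(w_h,v_h)|$ is estimated for \emph{discrete} $w_h$, where Lemma \ref{lem:difference} applies with no regularity hypothesis to verify: one observes $|d^k w_h|_{1,p,T}=0$ for $\mathcal{P}_1^-\Lambda^k$ functions and uses inverse inequalities to convert $\Theta_1(\alpha,\theta,\gamma,w_h)$ into $\Theta_2(\alpha,\theta,\gamma)\|w_h\|_{H\Lambda,\Omega}$ with $\Theta_2$ depending only on the coefficients. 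The perturbation is then absorbed uniformly for $h\lesssim c_0\min\{\|\theta\|_{1,\infty}^{-1},\Theta_2^{-1}\}$. If you restructure your proof this way --- citing (or reproving) the Schatz-type result for $a$ on $V_h\times V_h$, then perturbing to $a_h$ entirely within $V_h$ --- the regularity obstacle you flagged disappears. Your remarks that the square finite-dimensional system makes the two inf-sup orderings coincide, and that well-posedness follows, are correct and match the paper.
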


\begin{proof}
It is well-known (c.f. Schatz \cite{schatz1974observation}, Xu
\cite{xu1996two}) that, thanks to \eqref{equ:inf-sup}, the bilinear form
$a(u_h,v_h)$ satisfies discrete inf-sup condition as for sufficiently
small $h$:
$$ 
\sup_{v_h\in V_h} \frac{a(w_h, v_h)}{\|v_h\|_{H\Lambda,\Omega}} \geq
\frac{c_0}{2} \|w_h\|_{H\Lambda, \Omega}, \quad 
\sup_{v_h\in V_h} \frac{a^*(w_h, v_h)}{\|v_h\|_{H\Lambda,\Omega}} \geq
\frac{c_0}{2} \|w_h\|_{H\Lambda, \Omega} \qquad w_h \in V_h. 
$$
It follows from Lemma \ref{lem:difference} that  
$$
|a(w_h,v_h)- a_h(w_h,v_h)| \lesssim \Theta_1(\alpha,\theta,\gamma,w_h)
h \|v_h\|_{H\Lambda,\Omega}.
$$
Observe that $|d^k w_h|_{1,p,T} = 0$ for any $w_h \in V_h$ and $T\in
\mathcal{T}_h$. By inverse equality, we have the estimate of discrete
flux 
$$ 
h_T^{n({1\over2} - {1\over p})}|J^k_{\bar{\theta}} w_h|_{1,p,T} = 
h_T^{n({1\over2} - {1\over p})}\|\bar{\theta}\|_{0,\infty,T}
|w_h|_{1,p,T} \lesssim \|\theta\|_{0,\infty,T}
\|w_h\|_{H\Lambda,T}.
$$ 
The rest of the terms in $\Theta_1(\alpha, \theta, \gamma, w_h)$ can be
estimated by the inverse inequality. That is,   
\begin{equation} \label{equ:discrete-dual} 
|a(w_h, v_h) - a_h(w_h, v_h)| \lesssim \Theta_2(\alpha, \theta,
\gamma) h \|w_h\|_{H\Lambda,\Omega} \|v_h\|_{H\Lambda,\Omega},
\end{equation} 
where 
\begin{equation} \label{equ:C2} 
\begin{aligned}
\Theta_2(\alpha,\theta,\gamma) := 
\max_{T\in \mathcal{T}_h } 
\bigg\{ &
 \left( C(p) \|\alpha\|_{0,\infty,T}\|\theta\|_{0,\infty,T}\right)^2 
+ \left( C(r) \|\gamma\|_{0,\infty,T} \right)^2 \\
+ & \left(  C(r)\|\alpha\|_{0,\infty,T} |\theta|_{1,\infty,T}(1 +
h_T\|\theta\|_{0,\infty,T}) \right)^2 \bigg\}^{1\over 2}.
\end{aligned}
\end{equation} 
The desired result then follows when 
\begin{equation} \label{equ:inf-sup-h-constrain} 
h \lesssim h_0 := c_0\min\left\{ 
\|\theta\|_{1,\infty}^{-1}, \Theta_2(\alpha,\theta,\gamma)^{-1}
\right\}.
\end{equation} 
\end{proof}

We have the following convergence results for problems
\eqref{equ:conv-diff} and \eqref{equ:conv-diff-dual}.
\begin{theorem} \label{thm:convergence}
Let $u$ be the solution of the problem \eqref{equ:conv-diff}. Assume
that for all $T \in \mathcal{T}_h$, $u \in W^{1,r}(T)$ and
$J^k_{\bar{\theta}}u \in W^{1,p}(T)$, $p,r>n$. Then, the following
estimate holds for sufficiently small $h$:
\begin{equation} \label{equ:estimate-primal}
\|u_h - \tilde{\Pi}^k_{\theta,h} u\|_{H\Lambda,\Omega} \lesssim
\frac{1}{c_1}\Theta_1(\alpha, \theta, \gamma, u)h.
\end{equation}
\end{theorem}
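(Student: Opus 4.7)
The plan is to run a standard Strang-type stability-plus-consistency argument, pivoting on the discrete inf-sup condition in Theorem~\ref{thm:inf-sup-h} and the consistency bound in Lemma~\ref{lem:difference}. Write $e_h := u_h - \tilde{\Pi}_{\theta,h}^k u \in V_h$. By the discrete inf-sup inequality \eqref{equ:inf-sup-h}, for $h$ small enough there exists $v_h \in V_h$ with $\|v_h\|_{H\Lambda,\Omega} = 1$ such that
\begin{equation*}
c_1 \|e_h\|_{H\Lambda,\Omega} \leq \frac{a_h(e_h, v_h)}{\|v_h\|_{H\Lambda,\Omega}}.
\end{equation*}

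Next, I would rewrite $a_h(e_h, v_h)$ using the discrete equation \eqref{equ:SAFE} and the continuous equation \eqref{equ:variational}:
\begin{equation*}
a_h(e_h, v_h) = a_h(u_h, v_h) - a_h(\tilde{\Pi}_{\theta,h}^k u, v_h) = F(v_h) - a_h(\tilde{\Pi}_{\theta,h}^k u, v_h) = a(u, v_h) - a_h(\tilde{\Pi}_{\theta,h}^k u, v_h),
\end{equation*}
where in the last step I use that $v_h \in V_h \subset V$, so \eqref{equ:variational} gives $a(u, v_h) = F(v_h)$.

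Now the right-hand side is precisely the quantity bounded by Lemma~\ref{lem:difference}. Since by hypothesis $u \in W^{1,r}(T)$ and $J_{\bar{\theta}}^k u \in W^{1,p}(T)$ on every $T \in \mathcal{T}_h$ with $p,r>n$, and since $h \lesssim h_0$ in \eqref{equ:inf-sup-h-constrain} certainly enforces $h_T \lesssim \|\theta\|_{1,\infty,T}^{-1}$, Lemma~\ref{lem:difference} yields
\begin{equation*}
|a(u, v_h) - a_h(\tilde{\Pi}_{\theta,h}^k u, v_h)| \lesssim \Theta_1(\alpha,\theta,\gamma,u)\, h\, \|v_h\|_{H\Lambda,\Omega}.
\end{equation*}
Combining the two displays gives $c_1 \|e_h\|_{H\Lambda,\Omega} \lesssim \Theta_1(\alpha,\theta,\gamma,u)\, h$, which is exactly \eqref{equ:estimate-primal}.

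There is essentially no hard step; the work was all done upstream. The only subtlety worth double-checking is that the hypotheses of Lemma~\ref{lem:difference}, in particular the local mesh-size restriction $h_T \lesssim \|\theta\|_{1,\infty,T}^{-1}$, are implied by the smallness of $h$ assumed in Theorem~\ref{thm:inf-sup-h}, so that both consistency and stability are available simultaneously. A parallel argument, using $a_h^*$ and the second half of \eqref{equ:inf-sup-h}, delivers the analogous estimate for the dual problem \eqref{equ:conv-diff-dual} if desired.
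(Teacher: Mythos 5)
Your argument is correct and is essentially the paper's own proof: both rewrite $a_h(u_h-\tilde{\Pi}^k_{\theta,h}u,v_h)=a(u,v_h)-a_h(\tilde{\Pi}^k_{\theta,h}u,v_h)$ via Galerkin orthogonality, apply Lemma~\ref{lem:difference}, and close with the discrete inf-sup condition \eqref{equ:inf-sup-h}. Your added remark that the smallness of $h$ must simultaneously guarantee the hypothesis $h_T\lesssim\|\theta\|_{1,\infty,T}^{-1}$ of Lemma~\ref{lem:difference} is a point the paper leaves implicit, and it is consistent with the constraint \eqref{equ:inf-sup-h-constrain}.
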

\begin{proof}
By Lemma \ref{lem:difference},
$$
\begin{aligned}
a_h(u_h- \tilde{\Pi}^k_{\theta,h}u, v_h) 
&= (f, v_h) - a_h(\tilde{\Pi}^k_{\theta,h}u, v_h) = a(u, v_h) -
a_h(\tilde{\Pi}^k_{\theta,h}u, v_h)\\
&\lesssim \Theta_1(\alpha,\theta,\gamma,u)h \|v_h\|_{H\Lambda,\Omega}.
\end{aligned}
$$
By the discrete inf-sup condition \eqref{equ:inf-sup-h},
$$
\|u_h - \tilde{\Pi}_{\theta,h}^ku\|_{H\Lambda,\Omega}\lesssim
\frac{1}{c_1}\Theta_1(\alpha,\theta,\gamma,u)h.
$$
This completes the proof.
\end{proof}

\begin{theorem} \label{thm:convergence-dual}
Let $u$ be the solution of the dual problem
\eqref{equ:conv-diff-dual}. Assume that for all $T \in \mathcal{T}_h$, $h_T
\lesssim \|\theta\|_{1,\infty,T}^{-1}$, 
$u \in W^{1,r}(T)$ and $J^k_{\bar{\theta}}u \in W^{1,p}(T)$, $p,r>n$.
Then the following estimate holds for sufficiently small $h$:
\begin{equation} \label{equ:convergence-dual}
\|u - u_h\|_{H\Lambda,\Omega} \lesssim (1+\frac{M}{c_1})\inf_{w_h \in V_h} \|u -
w_h\|_{H\Lambda,\Omega} + \frac{1}{c_1}\tilde{\Theta}_2(\alpha,\theta,\gamma)
h|\ln h|^\sigma\|u\|_{H\Lambda,\Omega},
\end{equation}
where $M$ is the upper bound of the bilinear form, i.e. $a(u, v) \leq
M\|u\|_{H\Lambda,\Omega}\|v\|_{H\Lambda,\Omega}$, and  
\begin{equation} \label{equ:tilde-Theta2}
\tilde{\Theta}_2(\alpha,\theta,\gamma) := 
\max_{T\in \mathcal{T}_h } 
\bigg\{ 
 \left( \|\alpha\|_{0,\infty,T}\|\theta\|_{1,\infty,T}\right)^2 +
 \left( \|\gamma\|_{0,\infty,T} \right)^2 \bigg\}^{1\over 2}.
\end{equation}
\end{theorem}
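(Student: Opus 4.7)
The plan is a Céa-type argument exploiting the discrete inf-sup condition established in Theorem \ref{thm:inf-sup-h}. For an arbitrary $w_h\in V_h$, the triangle inequality gives $\|u-u_h\|_{H\Lambda,\Omega} \leq \|u-w_h\|_{H\Lambda,\Omega} + \|w_h - u_h\|_{H\Lambda,\Omega}$, so the task reduces to controlling the discrete error $\|w_h - u_h\|_{H\Lambda,\Omega}$ by a multiple of $\|u-w_h\|_{H\Lambda,\Omega}$ plus the target $\tilde{\Theta}_2 h|\ln h|^\sigma \|u\|_{H\Lambda,\Omega}/c_1$; infimizing over $w_h$ then yields \eqref{equ:convergence-dual}.

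By the dual discrete inf-sup condition in \eqref{equ:inf-sup-h}, $c_1\|w_h - u_h\|_{H\Lambda,\Omega} \leq \sup_{v_h \in V_h} a_h^*(w_h - u_h, v_h)/\|v_h\|_{H\Lambda,\Omega}$. Since $u$ and $u_h$ satisfy $a^*(u,v_h) = F(v_h) = a_h^*(u_h,v_h)$, I insert $\pm a^*(w_h,v_h)$ to obtain the decomposition $a_h^*(w_h - u_h, v_h) = \bigl(a_h^*(w_h,v_h) - a^*(w_h,v_h)\bigr) + a^*(w_h - u, v_h) = \bigl(a_h(v_h, w_h) - a(v_h, w_h)\bigr) + a^*(w_h - u, v_h)$. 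Continuity of $a^*$ handles the second summand: $|a^*(w_h - u, v_h)| \leq M\|u - w_h\|_{H\Lambda,\Omega}\|v_h\|_{H\Lambda,\Omega}$, contributing $M/c_1$ times the best-approximation error.

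The consistency-type term $|a_h(v_h, w_h) - a(v_h, w_h)|$ involves two finite-element arguments, so the argument-swapped version of the bound \eqref{equ:discrete-dual} from the proof of Theorem \ref{thm:inf-sup-h} (relabelling the roles of $w_h$ and $v_h$ in the decomposition $I_{1,T}+I_{2,T}+I_{3,T}+I_{4,T}$) yields the preliminary estimate $\lesssim \Theta_2 h \|v_h\|_{H\Lambda,\Omega}\|w_h\|_{H\Lambda,\Omega}$. The refinement to the cleaner $\tilde{\Theta}_2 h |\ln h|^\sigma$ form comes from choosing the Sobolev exponents $p$ and $r$ in the estimates \eqref{equ:I2T}--\eqref{equ:I3T} to depend on $h$, namely $p = r = n + 1/|\ln h|$: this makes $C(p), C(r) \lesssim |\ln h|^\sigma$ while the factors $h_T^{n(1/2-1/p)}$ are absorbed by an inverse inequality on the discrete $v_h,w_h$ to contribute only an $\mathcal{O}(1)$ constant. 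Under the standing hypothesis $h_T\|\theta\|_{0,\infty,T} \lesssim 1$, the $\|\theta\|_{0,\infty,T}$ and $|\theta|_{1,\infty,T}$ contributions merge into $\|\theta\|_{1,\infty,T}$, reproducing exactly the form \eqref{equ:tilde-Theta2}.

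Collecting the two pieces gives $c_1\|w_h - u_h\|_{H\Lambda,\Omega} \lesssim \tilde{\Theta}_2(\alpha,\theta,\gamma) h|\ln h|^\sigma \|w_h\|_{H\Lambda,\Omega} + M\|u-w_h\|_{H\Lambda,\Omega}$; taking $w_h$ to be a stable quasi-interpolant of $u$ so that $\|w_h\|_{H\Lambda,\Omega} \lesssim \|u\|_{H\Lambda,\Omega}$ (using e.g.\ the Scott--Zhang-type operators available for each of the $\mathcal{P}_1^-\Lambda^k$ spaces) and combining with the triangle inequality yields \eqref{equ:convergence-dual}. The main obstacle is the extraction of the $|\ln h|^\sigma$ factor: the $h$-dependent tuning of the Sobolev exponents must be balanced against the inverse estimates so that the Sobolev embedding constants $C(p), C(r)$ are logarithmic in $h$ rather than frozen at a fixed $p>n$, while simultaneously keeping the $h_T$ powers sharp.
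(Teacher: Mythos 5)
Your proposal is correct and follows essentially the same route as the paper: insert $\pm a^*(w_h,v_h)$, bound the consistency term via the discrete estimate \eqref{equ:discrete-dual} (which, written as $a_h^*(w_h,v_h)=a_h(v_h,w_h)$, applies directly) with the $h$-dependent choice $p=r=n+|\ln h|^{-1}$ to convert $\Theta_2$ into $\tilde{\Theta}_2|\ln h|^\sigma$, bound the remainder by continuity, and close with the discrete inf-sup condition and the triangle inequality. Your explicit remark that one should take $w_h$ with $\|w_h\|_{H\Lambda,\Omega}\lesssim\|u\|_{H\Lambda,\Omega}$ (e.g.\ a stable quasi-interpolant) makes precise a step the paper performs silently.
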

\begin{proof}
For sufficiently small $h$, we can take $p = n + |\ln h|^{-1}$ and $r
= n+ |\ln h|^{-1}$.  By the boundedness of bilinear form and
\eqref{equ:discrete-dual},
$$
\begin{aligned}
& \quad a_h^*(u_h-w_h, v_h) \\ 
&= (f, v_h) - a_h^*(w_h, v_h) \\ 
&= a^*(u-w_h, v_h) + a^*(w_h, v_h) - a_h^*(w_h, v_h) \\
&\lesssim M\|u-w_h\|_{H\Lambda,\Omega}\|v_h\|_{H\Lambda,\Omega}
+ \Theta_2(\alpha,\theta, \gamma)h\|w_h\|_{H\Lambda,\Omega}\|
v_h\|_{H\Lambda,\Omega} \\
& \lesssim M\|u-w_h\|_{H\Lambda,\Omega}\|v_h\|_{H\Lambda,\Omega}
+\tilde{\Theta}_2(\alpha,\theta,\gamma)h |\ln
h|^\sigma\|w_h\|_{H\Lambda,\Omega}\| v_h\|_{H\Lambda,\Omega}.
\end{aligned}
$$
Again, by the discrete inf-sup condition \eqref{equ:inf-sup-h}, we
deduce that
$$
\begin{aligned}
\|u_h-w_h\|_{H\Lambda,\Omega} & \lesssim
\frac{M}{c_1}\|u-w_h\|_{H\Lambda,\Omega}
+ \frac{1}{c_1}\tilde{\Theta}_2(\alpha,\theta,\gamma) h|\ln
h|^\sigma\|w_h\|_{H\Lambda,\Omega} \\
& \lesssim \frac{M}{c_1}\|u-w_h\|_{H\Lambda,\Omega} +
\frac{1}{c_1}\tilde{\Theta}_2(\alpha,\theta,\gamma) h |\ln
h|^\sigma\|u\|_{H\Lambda,\Omega}.
\end{aligned}
$$
Thus, by triangle inequality, we obtain the desired result.
\end{proof}

\section{Numerical Tests} \label{sec:numerical}
In this section, we present several numerical tests in both 2D
and 3D to show the convergence of SAFE scheme as well as the
performance for convection-dominated problems. We set $\bar{\theta}|_T
= \theta(x_c|_T)$ on each element $T$.  The uniform
meshes with different mesh sizes are applied in all the tests.

\subsection{$H({\rm div})$ convection-diffusion in 2D} The 
$\mathcal{P}_1^-\Lambda^k$ discrete de Rham complex in 2D is 
$$
\mathcal{P}_1^-\Lambda^0 \xrightarrow{\curl} \mathcal{P}_1^- \Lambda^1
\xrightarrow{\div} \mathcal{P}_1^-\Lambda^2,
$$
where the 2D $\curl$ operator is defined by $\curl\phi =
(\partial_{x_2}\phi, -\partial_{x_1}\phi)^T$.  Therefore, when $k=1$
in 2D, the operator $\mathcal{L}$ in the boundary value problem
\eqref{equ:conv-diff} can be written as  
$$ 
\mathcal{L}u = -\grad (\alpha \div u + \beta \cdot u) + \gamma u.
$$ 
The computational domain is the square $\Omega = (0,1)^2$, and
$\Gamma_0 = \partial\Omega$. \commentone{That is, the homogeneous
boundary condition $u\cdot n|_{\partial\Omega} = 0$ is applied.} The
convection speed is set to be $\beta = (-x_2, x_1)$.

\paragraph{Convergence order test} $f$ is analytically derived so that
the exact solution of \eqref{equ:conv-diff} is 
$$ 
u = \begin{pmatrix}
e^{x_1 - x_2} x_1x_2(1-x_1)(1-x_2) \\
\sin(\pi x_1)\sin(\pi x_2)
\end{pmatrix}.
$$ 

As shown in Table \ref{tab:div-error1}, the first-order
convergence is observed for both $L^2$ and $H({\rm div})$ errors when
$\alpha = 1, \gamma = 1$. For the case in which $\alpha = 0.01$, no
convergence order is observed for $H({\rm div})$ error when the ratio
$h/\alpha$ is rather large. With the growth of $1/h$, the discrete
system becomes more and more diffusion-dominated. Thus, the first-order
convergence rate in $H({\rm div})$ norm is gradually shown up. To our
surprise, \commentone{for the solution without boundary or internal
layer}, the $L^2$ convergence order of SAFE seems to be stable with
respect to the diffusion coefficient $\alpha$, see Table
\ref{tab:div-error2}.

\begin{table}[!htbp]
\centering 
\captionsetup{justification=centering}
\subfloat[$\alpha=1,\gamma=1$]{
\footnotesize
\begin{tabular}{|l|cc|cc|} 
\hline  
$1/h$ & $\|\epsilon_h\|_0$ & $h^n$ &   
$\|{\rm div}\epsilon_h\|_0$ & $h^n$\\ \hline    
$4$ & 0.151320 & --- &   
0.423821 & --- \\ 
$8$ & 0.077022 & 0.97 &   
0.215003 & 0.98 \\ 
$16$ & 0.038693 & 0.99 &   
0.107889 & 0.99 \\   
$32$ & 0.019370 & 1.00 &   
0.053993 & 1.00 \\ 
$64$ & 0.009688 & 1.00 &   
0.027003 & 1.00 \\ 
$128$ & 0.004844 & 1.00 &   
0.013502 & 1.00 \\ \hline
\end{tabular}
\label{tab:div-error1}
}
\subfloat[$\alpha=0.01, \gamma=1$] {
\footnotesize
\begin{tabular}{|l|cc|cc|}
\hline  
$1/h$ & $\|\epsilon_h\|_0$ & $h^n$ &   
$\|{\rm div}\epsilon_h\|_0$ & $h^n$\\ \hline    
$4$ & 0.169304 & --- &   
0.917169 & --- \\ 
$8$ & 0.084289 & 1.01 &   
0.876446 & 0.07 \\ 
$16$ & 0.040676 & 1.05 &   
0.744944 & 0.23 \\   
$32$ & 0.019737 & 1.04 &   
0.494417 & 0.59 \\ 
$64$ & 0.009741 & 1.02 &   
0.273080 & 0.86 \\ 
$128$ & 0.004851 & 1.01 &   
0.140387 & 0.96 \\ \hline
\end{tabular}
\label{tab:div-error2}
}
\caption{The error, $\epsilon_h = u - u_h$, and convergence order for
2D $H({\rm div})$ convection-diffusion problems.} \label{tab:div-error}
\end{table}

\paragraph{Numerical stability} We set $f = (1, 1)^T$ and $h = 1/32$.
We observe that, when $\alpha = 2\times 10^{-3}$, the SAFE
discretization is stable (Figure \ref{fig:div-SAFE1}), in comparison
with the standard conforming discretization based on the $H(\rm div)$
variational form, which suffers from spurious oscillation (Figure
\ref{fig:div-standard}).  

\begin{figure}[!htbp]
\centering 
\captionsetup{justification=centering}
\subfloat[$\alpha = 2\times 10^{-3}$, standard conforming
discretization]{
  \includegraphics[width=0.45\textwidth]{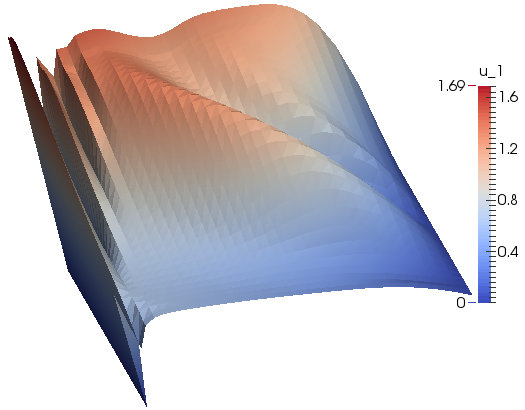}
  \label{fig:div-standard}
}\quad %
\subfloat[$\alpha = 2\times 10^{-3}$, SAFE]{
  \includegraphics[width=0.45\textwidth]{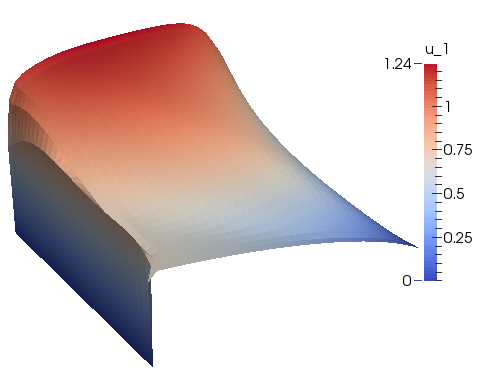}
  \label{fig:div-SAFE1}
} \\
\subfloat[$\alpha = 1\times 10^{-5}$, SAFE]{
  \includegraphics[width=0.45\textwidth]{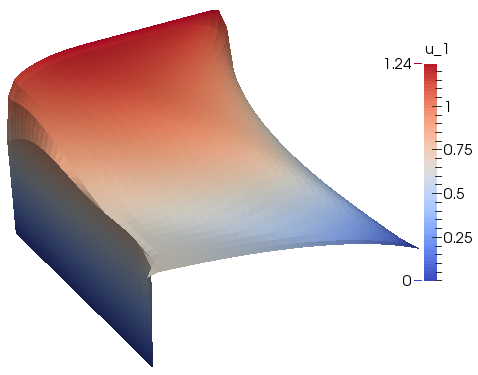}
  \label{fig:div-SAFE2}
} \quad %
\subfloat[$\alpha = 1\times 10^{-7}$, SAFE]{
  \includegraphics[width=0.45\textwidth]{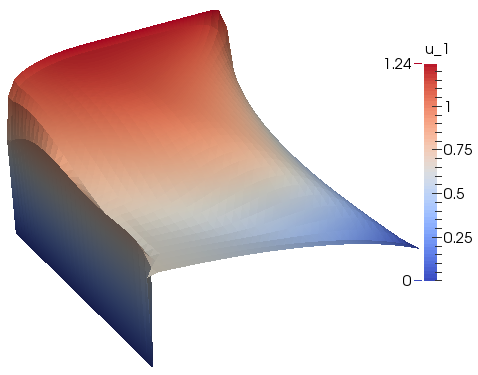}
  \label{fig:div-SAFE3}
}
\caption{Plots of $u_1$ for 2D $H({\rm div})$ convection-diffusion
problems.} 
\end{figure}

Moreover, we take the diffusion coefficient $\alpha = 1\times
10^{-7}$.  Compared to the convection speed $\beta$, the ratio $h /
\alpha = 312500$ is rather large. Fig.
\ref{fig:div-SAFE2}-\ref{fig:div-SAFE3} clearly shows that there is no
spurious oscillation or smearing near the boundary or internal layers
for SAFE. In addition, the numerical solutions under the given mesh
are shown to converge as $\alpha \to 0$, which confirms the results in 
Appendix \ref{subsec:limiting}.

\subsection{$H({\rm curl})$ \commentone{convection-diffusion} in 3D}
The $H(\curl)$ convection-diffusion problem \eqref{eq:E-cd} is
exactly the model problem \eqref{equ:conv-diff-dual} when $k = 1$ in
3D. The numerical test is taken on the unit cube $\Omega = (0,1)^3$.
Let the exact solution be 
$$ 
u = 
\begin{pmatrix}
\sin x_3 \\ \sin x_1 \\ \sin x_2
\end{pmatrix}.
$$ 
Let $\Gamma_0 = \partial \Omega$ and the convection speed be $\beta =
(x_2, x_3, x_1)^T$. The Dirichlet boundary condition and
$f$ can be analytically derived.

\begin{table}[!htbp]
\centering 
\captionsetup{justification=centering}
\subfloat[$\alpha=1,\gamma=1$]{
\footnotesize
\begin{tabular}{|l|cc|cc|} 
\hline  
$1/h$ & $\|\epsilon_h\|_0$ & $h^n$ &   
$\|{\rm curl}\epsilon_h\|_0$ & $h^n$\\ \hline    
$2$ & 0.259495 & --- &   
0.108122 & --- \\ 
$4$ & 0.129934 & 0.99 &   
0.053325 & 1.02 \\ 
$8$ & 0.064987 & 1.00 &   
0.026350 & 1.02 \\ 
$16$ & 0.032496 & 1.00 &   
0.013083 & 1.01 \\   
\hline
\end{tabular}
\label{tab:curl-error1}
}
\subfloat[$\alpha=0.02, \gamma=1$] {
\footnotesize
\begin{tabular}{|l|cc|cc|}
\hline  
$1/h$ & $\|\epsilon_h\|_0$ & $h^n$ &   
$\|{\rm curl}\epsilon_h\|_0$ & $h^n$\\ \hline    
$2$ & 0.267544 & --- &   
0.199419 & --- \\ 
$4$ & 0.151569 & 0.82 &   
0.178998 & 0.16 \\ 
$8$ & 0.089043 & 0.77 &   
0.120931 & 0.57 \\ 
$16$ & 0.047192 & 0.92 &   
0.057896 & 1.06 \\   
\hline
\end{tabular}
\label{tab:curl-error2}
}
\caption{The error, $\epsilon_h = u - u_h$, and convergence order for
3D $H({\rm curl})$ convection-diffusion problems.} \label{tab:curl-error}
\end{table}

As shown in Table \ref{tab:curl-error}, the first-order convergence
is observed for both $L^2$ and $H({\rm curl})$ errors when $\alpha =
1, \gamma = 1$. In addition, when the convection and $h/\alpha$ are of
the same order of magnitude, the first-order convergence for $H({\rm
curl})$ error is observed.  

\appendix
\section{Implementation issues and limiting case} \label{sec:append}
We shall discuss the implementation of SAFE, and briefly discuss the
limiting case when the diffusion coefficient approaches to zero. 
\subsection{Bernoulli functions} \label{subsec:Bernoulli}
In light of \eqref{equ:aT2-k0}, \eqref{equ:aT2-k1} and
\eqref{equ:aT2-k2}, the local SAFE stiffness matrix is assembled by  
$\omega_E^T$, $\omega_{FF'}^T$ or $\omega_T$, which is determined
by the local stiffness matrix of $(d^kw_h, d^kv_h)_T$ or the geometric
information of $T$, and the
following coefficients:
$$ 
\mbox{diffusion coefficient} \times
\frac{\mbox{exponential average on sub-simplex of dimension }k}
{\mbox{exponential average on sub-simplex of dimension }k+1}.
$$ 
Therefore, thanks to the affine mapping to reference element, the
implementation of the SAFE hinges on the following Bernoulli functions: 
\begin{subequations} \label{equ:Bernoulli} 
\begin{align}
B_1^\epsilon(s) &:= \epsilon \frac{1}{\int_0^1 \exp(s
\hat{x}_1/\epsilon)\, \mathrm{d}\hat{x}_1}, \label{equ:B1}\\ 
B_2^\epsilon(s,t) &:= \epsilon \frac{\int_0^1 \exp(s
\hat{x}_1/\epsilon)\,\mathrm{d}\hat{x}_1}
{2 \int_0^1 \int_0^{1-\hat{x}_2}
\exp((s\hat{x}_1 + t\hat{x}_2)/\epsilon)\,\mathrm{d}\hat{x}_1 
\mathrm{d}\hat{x}_2}, \label{equ:B2}\\
B_3^\epsilon(s,t,r) &:= \epsilon \frac{2 \int_0^1 \int_0^{1-\hat{x}_2}
\exp((s\hat{x}_1 +
t\hat{x}_2)/\epsilon)\,\mathrm{d}\hat{x}_1\mathrm{d}\hat{x}_2}
{6\int_0^1\int_0^{1-\hat{x}_3} \int_0^{1-\hat{x}_2 - \hat{x}_3} 
\exp((s\hat{x}_1 + t\hat{x}_2 + r\hat{x}_3)/\epsilon)\,\mathrm{d}\hat{x}_1
\mathrm{d}\hat{x}_2\mathrm{d}\hat{x}_3}. \label{equ:B3}
\end{align}
\end{subequations} 
Denote the vertexes of $T$ by $a_i, (i=1,2,3,4)$. Define $\bar{\beta}
= \bar{\theta}\bar{\alpha}$ and $t_{ij} = a_j - a_i$. Below we give
the detailed formulations of local SAFE bilinear forms.
\begin{enumerate}
\item $k=0$: The local SAFE bilinear form \eqref{equ:aT2-k0} can be
implemented by 
\begin{equation} \label{equ:aT2-k0-Bernoulli}
\begin{aligned}
&\quad~ a_T(w_h, v_h) \\
&= \sum_{E = \overrightarrow{a_ia_j}}
\omega_E^T \bar{\alpha} 
\frac{1}{\dashint_E \exp(\bar{\beta}\cdot x/\bar{\alpha})} \\ 
&\qquad\qquad \big( \exp(\bar{\theta} \cdot a_j)w_h(a_j) 
     -\exp(\bar{\theta} \cdot a_i) w_h(a_i)\big)\delta_E(v_h) \\
&= \sum_{E = \overrightarrow{a_ia_j}} \omega_E^T
\big( B_1^{\bar{\alpha}}(\bar{\beta}\cdot t_{ji})w_h(a_j)
    - B_1^{\bar{\alpha}}(\bar{\beta}\cdot t_{ij})w_h(a_i)
\big) \delta_E(v_h).
\end{aligned}
\end{equation}
\item $k=1$: Note that, for any two faces $F =
\overrightarrow{a_ia_ja_k}$ and $F' = \overrightarrow{a_ia_ja_l}, (k
\neq l)$, the orientations must be different. Therefore, the local
SAFE bilinear form \eqref{equ:aT2-k1} can be implemented by 
\begin{equation} \label{equ:aT2-k1-Bernoulli}
\begin{aligned}
& \quad~ a_T(w_h, v_h) \\
& = \sum_{\substack{F = \overrightarrow{a_ia_ja_k} \\
F' = \overrightarrow{a_ia_ja_l}, k\neq l}} - \omega_{FF'}^T
\bar{\alpha} \frac{1}{\dashint_F \exp(\bar{\beta}\cdot x
    /\bar{\alpha})} \Big( 
l^1_{\overrightarrow{a_ia_j}}(w_h) \dashint_{\overrightarrow{a_ia_j}}
\exp(\bar{\theta}\cdot x) 
\\
& \qquad \quad 
+ l^1_{\overrightarrow{a_ja_k}}(w_h) \dashint_{\overrightarrow{a_ja_k}}
\exp(\bar{\theta}\cdot x) 
+ l^1_{\overrightarrow{a_ka_i}}(w_h) \dashint_{\overrightarrow{a_ka_i}}
\exp(\bar{\theta}\cdot x) 
\Big)  \\
&\qquad\quad \cdot \big( l^1_{\overrightarrow{a_ia_j}}(v_h) 
+ l^1_{\overrightarrow{a_ja_l}}(v_h)
+ l^1_{\overrightarrow{a_la_i}}(v_h) \big) \\
&= \sum_{\substack{F = \overrightarrow{a_ia_ja_k} \\
F' = \overrightarrow{a_ia_ja_l}, k\neq l}} - \omega_{FF'}^T
\big( 
B_2^{\bar{\alpha}}(\bar{\beta}\cdot t_{ij}, \bar{\beta}\cdot t_{ik}) 
l^1_{\overrightarrow{a_ia_j}}(w_h) \\
&\qquad \quad 
+ B_2^{\bar{\alpha}}(\bar{\beta}\cdot t_{jk}, \bar{\beta}\cdot t_{ji}) 
l^1_{\overrightarrow{a_ja_k}}(w_h)
+ B_2^{\bar{\alpha}}(\bar{\beta}\cdot t_{ki}, \bar{\beta}\cdot t_{kj}) 
l^1_{\overrightarrow{a_ka_i}}(w_h) \big) \\
& \qquad\quad \cdot \big( l^1_{\overrightarrow{a_ia_j}}(v_h) 
+ l^1_{\overrightarrow{a_ja_l}}(v_h)
+ l^1_{\overrightarrow{a_la_i}}(v_h) \big).
\end{aligned}
\end{equation}
Here, the degree of freedom $l^1_{\overrightarrow{a_ia_j}}(\cdot)$
corresponds to the orientation $\overrightarrow{a_ia_j}$.
\item $k=2$: The local SAFE bilinear form \eqref{equ:aT2-k2} can be
implemented by 
\begin{equation} \label{equ:aT2-k2-Bernoulli} 
\begin{aligned}
&\quad~ a_T(w_h, v_h) \\
& = \omega_T \big( B_3^{\bar{\alpha}}(\bar{\beta}\cdot t_{43},
    \bar{\beta}\cdot t_{42}, \bar{\beta}\cdot t_{41})l_{F_1}^2(w_h) \\
&\qquad + B_3^{\bar{\alpha}}(\bar{\beta}\cdot t_{14}, \bar{\beta}\cdot
  t_{13}, \bar{\beta}\cdot t_{12})l_{F_2}^2(w_h) \\
&\qquad + B_3^{\bar{\alpha}}(\bar{\beta}\cdot t_{21}, \bar{\beta}\cdot
  t_{24}, \bar{\beta}\cdot t_{23})l_{F_3}^2(w_h) \\
&\qquad + B_3^{\bar{\alpha}}(\bar{\beta}\cdot t_{32},
    \bar{\beta}\cdot t_{31}, \bar{\beta}\cdot t_{34})l_{F_4}^2(w_h) 
\big) \delta_T(v_h).
\end{aligned}
\end{equation}
Here, the degree of freedom $l_{F_i}^2(\cdot)$ corresponds to the unit
outer normal.
\end{enumerate}

\subsection{Limiting case} \label{subsec:limiting}
First we show that the Bernoulli functions \eqref{equ:Bernoulli}
remain viable when $\epsilon \to 0^+$. 
\begin{enumerate}
\item 1D Bernoulli function \eqref{equ:B1}: As $\epsilon \to 0^+$,
\begin{equation} \label{equ:B1-limit}
B_1^\epsilon(s) = \frac{s}{\exp(s/\epsilon) - 1} \to B_1^0(s) := 
\begin{cases}
-s & s \leq 0,\\ 
0 & s \geq 0.
\end{cases}
\end{equation}
\item 2D Bernoulli function \eqref{equ:B2}: As $\epsilon \to 0^+$,
\begin{equation} \label{equ:B2-limit}
\begin{aligned}
B_2^\epsilon(s,t) &=
\frac{t(t-s)(\exp(s/\epsilon)-1)}{2(s\exp(t/\epsilon) -
t\exp(s/\epsilon) + t - s)} \\ 
& \to B_2^0(s,t) := 
\begin{cases}
\frac{s-t}{2} & \max\{s,t\} = s \geq 0,\\
0 & \max\{s,t\} = t \geq 0, \\
-\frac{t}{2} & s\leq 0 \mbox{ and } t\leq 0. 
\end{cases}
\end{aligned}
\end{equation}
\item 3D Bernoulli function \eqref{equ:B3}: As $\epsilon \to 0^+$,
\begin{equation} \label{equ:B3-limit}
\begin{aligned}
B_3^\epsilon(s,t,r) &=
-\frac{r(s-r)(r-t)(s\exp(t/\epsilon) - t\epsilon(s/\epsilon) + t - s)}
{ \splitfrac{3\big( st(t-s)\exp(r/\epsilon) + sr(s-r)\exp(t/\epsilon)}
  {+ rt(r-t)\exp(s/\epsilon) + (t-s)(s-r)(r-t) \big)}} \\
& \to B_3^0(s,t,r) := 
\begin{cases}  
\frac{s-r}{3} & \max\{s,t,r\} = s \geq 0, \\
\frac{t-r}{3} & \max\{s,t,r\} = t \geq 0, \\
0 & \max\{s,t,r\} = r \geq 0, \\
-\frac{r}{3} & s \leq 0, t \leq 0, \mbox{ and } r \leq 0.
\end{cases}
\end{aligned}
\end{equation}
\end{enumerate}

In light of \eqref{equ:aT2-k0-Bernoulli}-\eqref{equ:B3-limit}, we
immediately see that the SAFE have limiting schemes when the diffusion
coefficient approaches to zero. The resulting schemes are special
upwind schemes according to limit of Bernoulli functions. 

\section*{Acknowledgments}
The authors would like to express their gratitude to Professor Ludmil
Zikatanov for his helpful discussions and suggestions.

\bibliographystyle{siamplain}
\bibliography{SAFE_SINUM.bib} 
\end{document}